\DeclareMathOperator{\Uprec}{{\Uparrow}}
\DeclareMathOperator{\Dprec}{{\Downarrow}}
\DeclareMathOperator{\RI}{\mathcal{RI}}
\DeclareMathOperator{\RF}{\mathcal{RF}}
\DeclareMathOperator{\pt}{pt}
\DeclareMathOperator{\R}{\mathfrak{F}}
\DeclareMathOperator{\Prim}{Prim}
\DeclareMathOperator{\cl}{cl}
\DeclareMathOperator{\uOf}{Clop^{\uparrow}}
\DeclareMathOperator{\dOf}{Clop^{\downarrow}}
\DeclareMathOperator{\nin}{\not\in}
\DeclareMathOperator{\Int}{int}
\renewcommand{\int}{\Int}
\DeclareMathOperator{\F}{\mathfrak{F}}
\DeclareMathOperator{\End}{End}
\spnewtheorem{corollary}[theorem]{Corollary}{\bfseries}{\itshape}
\spnewtheorem{lemma}[theorem]{Lemma}{\bfseries}{\itshape}
\spnewtheorem{proposition}[theorem]{Proposition}{\bfseries}{\itshape}
\spnewtheorem{definition}[theorem]{Definition}{\bfseries}{}
\spnewtheorem{remark}[theorem]{Remark}{\bfseries}{}
\begin{document}

\title{Towards a Gleason cover for compact pospaces%\thanks{Grants or other notes
%about the article that should go on the front page should be
%placed here. General acknowledgments should be placed at the end of the article.}
}
\subtitle{}

%\titlerunning{Short form of title}        % if too long for running head

\author{Laurent De Rudder        \and
        Georges Hansoul %etc.
}

%\authorrunning{Short form of author list} % if too long for running head

\institute{Laurent De Rudder \at
              D\'epartement de math\'ematiques (B37)\\
Universit\'e de Li\`ege\\Belgium \\
\email{l.derudder@uliege.be} \\
 %  \\
%             \emph{Present address:} of F. Author  %  if needed
}

\date{Received: date / Accepted: date}
% The correct dates will be entered by the editor

\maketitle

\begin{abstract}
We establish a new category equivalent to compact pospaces, and which extend the equivalence between compact Hausdorff spaces and Gleason spaces. As a corollary of this equivalence, we obtain in particular, that every compact pospace is the quotient of an f-space.
\keywords{Proximity \and Compact pospaces \and Duality theory \and Gleason covers}
% \PACS{PACS code1 \and PACS code2 \and more}
\subclass{06D22 \and 06D50  \and 54D30 \and 54E05  \and 54G05}
\end{abstract}
\section*{Introduction}

The category \textbf{DeV} of de Vries' compingent algebras \cite{deVries} can be considered among neighbouring categories in a network of Stone-like and Gelfand-like dualities involving the categories \textbf{KHaus} of compact Hausdorff spaces, \textbf{GlSp} of Gleason spaces, \textbf{KrFrm} of compact regular frames and \textbf{\emph{C}$^\star$-alg} of $C^\star$-algebras. 
\begin{center}
\begin{tikzpicture}
\node(KHaus) at (-1,0) {\textbf{KHaus}};
\node(DeV) at (1,0) {\textbf{DeV}};
\node(C) at (0,3.1) { \textbf{\emph{C}$^\star$-alg}};
\node(GlSp) at (1.6,1.9) { \textbf{GlSp}};
\node(KrFrm) at (-1.6,1.9) {\textbf{KrFrm}};
\draw[-, >=latex] (KHaus) to (DeV) ;
\draw[-, >=latex] (DeV) to(GlSp);
\draw[-, >=latex] (KrFrm) to(KHaus);
\draw[-, >=latex] (KrFrm) to (C);
\draw[-, >=latex] (GlSp) to (C);
\end{tikzpicture}
\end{center}

In particular, the category \textbf{KHaus} and \textbf{GlSp} are equivalent, as  established in \cite{Sourabh}. This was  first observed via the composition of the dualities between \textbf{KHaus} and \textbf{DeV} and between \textbf{DeV} and \textbf{GlSp}, then with a direct description. 

Categories of this base network were later generalized in different papers. Indeed, Bezhanishvili and Harding extended in \cite{Guramtriang} the dualities and equivalences between \textbf{KHaus}, \textbf{KrFrm} and \textbf{DeV} to dualities and equivalence between the categories \textbf{StKSp} of stably compact spaces, \textbf{StKFrm} of stably compact frames and \textbf{PrFrm} of proximity frames. As for the duality between \textbf{KHaus} and \textbf{\emph{C}$^\star$-alg}, a real version of the duality, given in \cite{Gurambal}, was extended in \cite{DeHaGelfand} to a duality between \textbf{KPSp} of compact pospaces and the category \textbf{usbal} of Stone semirings. We refer to \cite{Guramtriang} and \cite{DeHaGelfand} for the relevant definitions.
\begin{center}
\begin{tikzpicture}
\node(KHaus) at (-1,0) {\textbf{KHaus}};
\node(KPSp) at (-2,-1.4) {\textbf{KPSp}};
\node(DeV) at (1,0) {\textbf{DeV}};
\node(PrFrm) at (2,-1.4) {\textbf{PrFrm}};
\node(C) at (0,3.1) { \textbf{\emph{C}$^\star$-alg}};
\node(GlSp) at (1.6,1.9) { \textbf{GlSp}};
\node(KrFrm) at (-1.6,1.9) {\textbf{KrFrm}};
\node(StKFrm) at (-3.22,2.44) {\textbf{StKFrm}};
\node(OGlSp) at (3.22,2.44) {\textbf{?}};
\node(usbal) at (0,4.8) {\textbf{usbal}};
\draw[-, >=latex] (KHaus) to (DeV) ;
\draw[-, >=latex] (DeV) to(GlSp);
\draw[-, >=latex] (KrFrm) to(KHaus);
\draw[-, >=latex] (KrFrm) to (C);
\draw[-, >=latex] (GlSp) to (C);
\draw[-, >=latex] (KPSp) to (PrFrm) ;
\draw[-, >=latex] (PrFrm) to(OGlSp);
\draw[-, >=latex] (StKFrm) to(KPSp);
\draw[-, >=latex] (StKFrm) to (usbal);
\draw[-, >=latex] (OGlSp) to (usbal);
\draw[->, >=latex] (C) to (usbal);
\draw[->, >=latex] (GlSp) to (OGlSp);
\draw[->, >=latex] (KrFrm) to (StKFrm);
\draw[->, >=latex] (KHaus) to (KPSp);
\draw[->, >=latex] (DeV) to (PrFrm);
\end{tikzpicture}
\end{center}

 The aim of this paper is to complete the extensions initiated in \cite{Guramtriang} and \cite{DeHaGelfand} to the category \textbf{GlSp}. We point out that this extension process follows the same spirit as passing from Boolean algebras to distributive lattices, and from Stone spaces to Priestley spaces in the zero-dimensional setting (from the Boolean to the distributive setting as we shall often say in this paper).

The methodology goes as follows. First, we will establish on Priestley spaces the counterpart of proximity relations on lattices. The road was well paved by Castro and Celani in \cite{Castro}, where the dual of a quasi-modal lattice (a generalized proximity frame, but with a different class of morphisms) was already established as Priestley spaces endowed with an increasing closed binary relation. The obtained topological structures will be named ordered Gleason spaces and will be the objects of a category whose morphisms are binary specific relations and not usual maps (as it is already the case in the Boolean setting \cite{Sourabh}). Then, since duals of proximity frames in \cite{Guramtriang} were stably compact spaces, we will spend a few words on how to describe them as compact pospaces. Finally, following Bezhanishvili steps in \cite{StonebyDV}, we will show how to obtain directly the compact po-space dual to a Proximity frame via the latter's Priestley dual.  

\section{Preliminaries} \label{Section_Prelim}

In this section, we recall previous dualities which are essential for this paper, mainly for the sake of establishing  notations that will be used throughout the rest of the paper.

\subsubsection*{Priestley duality}
We begin with the celebrated Priestley duality \cite{Priestley1} and its characterization to frames in \cite{Pultr} through a suitable separation property. 

First of all, if $(X,\leq,\tau)$ is an ordered topological space, we denote by $\tau^\uparrow$ (resp. $\tau^\downarrow$) the topology of open upsets (resp. open downsets) of $\tau$. 

In particular, if $(X,\leq,\tau)$ is a Priestley space, it is well known that $\tau^\uparrow$ (resp. $\tau^\downarrow$) is generated by the clopen upsets (resp. clopen downsets) of $X$, which we denote by $\uOf(X)$ (resp. $\dOf(X)$). Moreover, $\uOf(X)$ (or simply $L$, should the context cause no confusion) is a distributive lattice when ordered by inclusion. Finally, if $f : X \longrightarrow Y$ is an increasing continuous function between Priestley space, then 
 \[ \uOf(f) : \uOf(Y) \longrightarrow \uOf(X) : O \longmapsto f^{-1}(O) \] is a 
lattice morphism.

%Moreover, $\tau$ is generated by $\uOf(X) \cup \dOf(X)$. Finally, the sets $\uOf(X)$ and $\dOf(X)$ ordered by inclusion are bounded distributive lattices. 

On the other hand, if $L$ is a bounded distributive lattice, we denote by $\Prim(L)$ (or more simply $X$) its set of prime filters, ordered by inclusion  and endowed by the topology generated by 
 \[ \lbrace \eta(a) \mid a \in L \rbrace \cup \lbrace \eta(a)^c \mid a \in L \rbrace, \] where 
 \[ \eta(a) := \lbrace x \in \Prim(L) \mid  x \ni a \rbrace. \] Then $\Prim(L)$ is a Priestley space and $\eta$ is a lattice isomorphism between $L$ and $\uOf(\Prim(L))$. Moreover, if $h : L \longrightarrow M$ is a lattice morphism then 
\[ \Prim(h) : \Prim(M) \longrightarrow \Prim(L) : x \longmapsto h^{-1}(x) \] is an increasing continuous function. The functors $\Prim$ and $\uOf$ establish a duality between the categories \textbf{DLat}, of bounded distributive lattices, and \textbf{Priest}, of Priestley spaces.

To continue, let us recall that a \emph{frame} is a complete lattice $L$  which satisfies the \emph{join infinite distributive law}: for every subset $S \subseteq L$ and every $a\in L$, we have
 \[ a \wedge \bigvee S = \bigvee \lbrace a \wedge s \mid s \in S \rbrace. \] Furthermore, a lattice morphism $h : L \longrightarrow M$ between two frames is a \emph{frame morphism} if it preserves arbitrary joins. 
 
  \begin{lemma}[\cite{Pultr}]\label{lem_to_f_space} Let $L$ be a frame and $(X,\leq,\tau)$ be its Priestley dual.
 \begin{enumerate}
 \item If $O \in \tau^\uparrow$, then its closure in $\tau$, denoted by $\cl(O)$, is an open upset. 
 \item If $S$ is a subset of $\uOf(X)$, then $ \bigvee S = \cl\left( \bigcup \lbrace O \mid O \in S \rbrace \right)$.
 \item The map $\eta : a \longmapsto \eta(a)$ is a frame morphism.
 \end{enumerate}
 \end{lemma}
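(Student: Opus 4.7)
The plan is to establish item (2) first, derive (1) as an immediate consequence, and then bootstrap (3) from (2) together with the fact that $\eta$ is already a lattice isomorphism.

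For (2), fix $S \subseteq \uOf(X)$ and let $V := \bigvee S$ denote the join computed in the frame $L$; since $L \cong \uOf(X)$ as lattices and $L$ is complete, $V$ is a clopen upset dominating every member of $S$, so $\cl(\bigcup S) \subseteq V$ is immediate and the work lies entirely in the reverse inclusion. Suppose, toward a contradiction, that some $x \in V$ lies outside $\cl(\bigcup S)$. Since the sets $A \cap B^c$ with $A, B \in \uOf(X)$ form a basis of the Priestley topology, I can pick such a basic open containing $x$ and disjoint from $\bigcup S$. Disjointness rewrites as $A \cap U \subseteq B$ for every $U \in S$, and this is precisely where the frame hypothesis becomes decisive: the join infinite distributive law yields
\[ A \cap V \;=\; \bigvee_{U \in S} (A \cap U) \;\leq\; B, \]
while $x$ belongs to $A \cap V$ and lies outside $B$, a contradiction.

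Item (1) then follows almost for free: given $O \in \tau^\uparrow$, the family $S_O := \lbrace U \in \uOf(X) \mid U \subseteq O \rbrace$ satisfies $\bigcup S_O = O$, because clopen upsets form a basis of $\tau^\uparrow$ in any Priestley space, and (2) identifies $\cl(O)$ with $\bigvee S_O$, an element of $\uOf(X)$, and in particular an open upset. For (3), we already know from Priestley duality that $\eta$ is a lattice morphism, so only the preservation of arbitrary joins remains; since $\eta : L \to \uOf(X)$ is an order isomorphism and $L$ is complete, $\eta$ is automatically a complete lattice isomorphism, so $\eta(\bigvee_i a_i) = \bigvee_i \eta(a_i)$ computed in $\uOf(X)$, and by (2) this join equals $\cl(\bigcup_i \eta(a_i))$.

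The one genuinely delicate step is the reverse inclusion in (2), where the join infinite distributive law must be converted into a topological separation statement via the Priestley basis. Everything else is routine bookkeeping: recognising the standard basis of a Priestley space, observing that $V$ is clopen because it lies in $L \cong \uOf(X)$, and transporting the frame structure of $L$ across the isomorphism $\eta$.
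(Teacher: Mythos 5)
Your proof is correct. The paper does not prove this lemma --- it is recalled from Pultr--Sichler \cite{Pultr} --- but your argument (establishing item (2) by converting the join-infinite distributive law into a separation statement via the Priestley basis of sets $A \cap B^c$, then deducing (1) by writing an open upset as a union of clopen upsets and (3) by transporting the complete-lattice structure along the order isomorphism $\eta$) is the standard one and is sound at every step.
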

 
 Refering to this result, an \emph{f-space} is a Priestley space $(X,\leq,\tau)$ which satisfies the first item of Lemma \ref{lem_to_f_space} and an increasing continuous function $f : X \longrightarrow Y$ is an \emph{$f$-function} if $f^{-1}\left(\cl(O)\right) = \cl\left(f^{-1}(O)\right)$ for all $O \in \tau^\uparrow$.
 
Pultr and Sichler proved in \cite{Pultr} that Priestley duality reduces to a duality between the categories \textbf{Frm} of frames and \textbf{FSp} of $f$-spaces. 
 
 \subsubsection*{Proximity frames}
 
 The second duality we recall was established by Bezhanishvili and Harding in \cite{Guramtriang}, it can be seen as a generalization to frames and stably compact spaces (see \cite[Definition VI-6.7.]{Compendium}) of de Vries duality. 
 
 \begin{definition}\label{def_proximity_frame_and_morphism} A \emph{proximity frame} is a pair $(L,\prec)$ where $L$ is a frame and $\prec$ is a \emph{proximity relation}, i.e. a binary relation on $L$ such that\footnote{The seemingly peculiar way used to denote the properties of $\prec$ (and the absence of S7) stems from the works on subordination and (pre-)contact algebras, see for instance \cite{Sourabh}, \cite{DeHa} or \cite{Koppelberg}.} 
 \begin{itemize}
 \item $\prec$ is a \emph{subordination relation}
 \begin{enumerate}
  \item[S1.] $0 \prec 0$ and $1 \prec 1$,
 \item[S2.] $a \prec b,c$ implies $a \prec b \wedge c$,
 \item[S3.] $a,b \prec c$ implies $a \vee b \prec c$,
 \item[S4.] $a \leq b \prec c \leq d$ implies $a \prec d$,
 \end{enumerate}
\item which has the following additional properties
\begin{enumerate}
  \item [S5.] $a = \bigvee \lbrace b \in L \mid b \prec a \rbrace$,
 \item[S6.] $ a \prec b$ implies $a \leq b$,
 \item[S8.] $a \prec b$ implies that $a \prec c \prec b$ for some $c$.
\end{enumerate}
 \end{itemize}
 For the sake of convenience, we often identify the pair $(L,\prec)$ with its underlying frame $L$. 
 
 If $S$ is a subset of $L$, we define $\Uprec S := \lbrace b \in L\mid \exists s \in S : s \prec b \rbrace $ ( $\Dprec S$ is defined dually). As usual, for an element $a \in L$, we write $\Uprec a$ instead of $\Uprec \lbrace a \rbrace$.
 \end{definition}
 \begin{definition}
A \emph{proximity morphism} is a map $h:L \longrightarrow M$ between two proximity frames such that:
\begin{enumerate}
\item[H0.] $h$ is a \emph{strong meet-hemimorphism}:
\begin{enumerate}
\item $h(1) = 1$ and $h(0) = 0$,
\item $h(a \wedge b) = h(a) \wedge h(b)$;
\end{enumerate}
\item[H1.] $a_1 \prec b_1$ and $a_2 \prec b_2$ implies $h(a_1 \vee a_2) \prec h(b_1) \vee h(b_2)$;
\item[H2.] $h(a) = \bigvee \lbrace h(b) \mid b \prec a \rbrace$.
\end{enumerate}
If $h : L \longrightarrow M$ and $g: M \longrightarrow N$ are proximity morphisms, their \emph{composition} is defined by
\[ g \star h : L \longrightarrow N : a \longmapsto \bigvee \lbrace g(h(b)) \mid b \prec a \rbrace. \]  
 We denote by \textbf{PrFrm} the category of proximity frames endowed with proximity  morphisms. 
 \end{definition}
 
 \begin{definition}\label{def_ends} If $L$ is a proximity frame, a \emph{round filter} of $L$ is a lattice filter $\F$ such that $\F = \Uprec \F$. We denote by $\mathcal{RF}(L)$ the set of all round filters of $L$.
 
 An \emph{end} is a round filter $p$ such that for every round filters $\F_1,\F_2$, we have $\F_1 \cap \F_2 \subseteq p$ if and only if $\F_1 \subseteq p$ or $\F_2 \subseteq p$. We denote by $\End(L)$ (or only by $P$) the set of all ends of $L$. 
 \end{definition}
 
 The ends of Definition \ref{def_ends} will now play a role similar to the one of prime filters in Priestley duality. Indeed, endowed with the topology generated by the sets of the form 
 \begin{equation}\label{eq_def_of_mu} \mu(a) := \lbrace p \in \End(L) \mid p \ni a \rbrace, \end{equation} $\End(L)$ is a stably compact space. Moreover, if $h: L \longrightarrow M$ is a proximity frame, then 
 \[ \End(h) : \End(M) \longrightarrow \End(L) : p \longmapsto \Uprec h^{-1}(p) \] is a proper continuous function. 
 
 On the topological side, if $(P,\tau)$ is a stably compact space, then $\tau:= \Omega(P)$ ordered by inclusion is a proximity frame when endowed with the relation $\prec$ defined by $O \prec V$ if and only if $O \subseteq K \subseteq V$ for some compact subset $K$. Furthermore, if $f : P \longrightarrow Q$ is a proper continuous function between two stably compact spaces, then 
 \[ \Omega(f) : \Omega(Q) \longrightarrow \Omega(P) : O \longmapsto f^{-1}(O) \] is  a proximity morphism. 
 
 Now, the functors $\End$ and $\Omega$ establish a duality between \textbf{PrFrm} and the category \textbf{StKSp} of stably compact spaces (see \cite[Theorem 4.18]{Guramtriang}).  
 
 \section{Priestley duality for proximity frames}
 
 In addition to the duality between \textbf{PrFrm} and \textbf{StKSp}, we can provide a modal-like duality between \textbf{PrFrm} and a category of f-spaces endowed with a particular binary relation $R$. Following the taxonomy of \cite{StonebyDV}, we name \emph{ordered Gleason spaces} the pairs $f$-spaces/relations obtained. At the objects level, we can rely on the works previously done in \cite{Castro} for quasi-modal lattices\footnote{The relations between proximity/subordination relations and quasi-modal operator is well discussed for instance in \cite{CelaniResume}.} and in \cite{Sourabh} for the Boolean setting. Hence, most of the proofs are left to the reader. 
 
 \begin{definition}\label{def_ordered_Gleason_space} An \emph{ordered Gleason space} is a triple $(X,\leq,R)$ where $(X,\leq)$ is an $f$-space and $R$ is a binary relation on $X$ satisfying the following properties:
 \begin{enumerate}
\item $R$ is closed in $X^2$;
\item $x \leq y \mathrel{R} z \leq t$ implies $x \mathrel{R} t$;
\item $R$ is a pre-order;
\item For every $O \in \uOf(X)$, we have $O = \cl\left( R[-,O^c]^c \right)$.
\end{enumerate}
An equivalent definition is given by substituting 2 with 
\begin{enumerate}
\item[2'.] $ x \leq y$ implies $x \mathrel{R} y$.
\end{enumerate}
 \end{definition}
 
 \begin{remark}\label{rem_useful_starting_rem}
 Let us highlight some observations and introduce notations that we freely use in the rest of the paper.
 \begin{itemize}
 \item   Let $R$ be a binary relation on an arbitrary set $X$:
 \begin{enumerate}
 \item If $E$ is a subset of $X$, we note 
 \[ R[-,E] := \lbrace x \mid \exists y \in E : x \mathrel{R} y \rbrace \text{ and } R[E,-] := \lbrace x \mid \exists y \in E : y \mathrel{R} x \rbrace. \]
 For an element $x \in X$, we note $R[-,x]$ instead of $R[-,\lbrace x \rbrace]$. Note that, if $(L,\prec)$ is a proximity frame, then we have $ \Uprec x = {\prec}[x,-]$.
 \item If $E$ and $F$ are subsets of $X$, then 
 \[R[-,E] \subseteq F \text{ if and only if } R[F^c,-] \subseteq E^c. \]
 \item If $X$ is a topological space, $R$ is closed in $X^2$ and $F$ is a closed subset of $X$, then $R[-,F]$ and $R[F,-]$ are closed.
 \end{enumerate}
 \item  Let $L$ be a distributive lattice and $S$ an arbitrary subset of $L$. We define 
\[ F_S := \lbrace x \in \Prim(L) \mid S \subseteq x \rbrace. \]
Remark that $ F_S = \bigcap \lbrace \eta(a) \mid a \in S \rbrace$ so that $F_S$ is a closed (and hence a compact) subset of $\Prim(L)$.
 \end{itemize}
 \end{remark}

The future duality between proximity frames and ordered Gleason spaces is now obtained as follows. Let $(L,\prec)$ be a proximity frame, its dual is given by $(X,R)$ where $X=\Prim(L)$ is the Priestley dual of $L$ and $R$ is the binary relation on $X$ defined by 
\begin{equation}\label{eq_def_of_R}
x \mathrel{R} y \text{ if and only if } \Uprec x \subseteq y.
\end{equation}
Let us highlight the fact that equivalent definitions of the relation $R$ are given by 
\begin{equation}\label{rem_alternative_def_of_R} \Uprec x \subseteq \Uprec y \text{ or } \Dprec y^c \subseteq \Dprec x^c \text{ or } \Dprec y^c \subseteq x^c. \end{equation}

\begin{lemma} Endowed with the relation $R$ defined in \eqref{eq_def_of_R}, $\Prim(L)$ is an ordered Gleason space. Furthermore, for every $a,b \in L$, we have 
\[ a \prec b \text{ if and only if } R[\eta(a),-] \subseteq \eta(b). \]
\end{lemma}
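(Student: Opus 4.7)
The plan is to verify each of the four axioms of Definition~\ref{def_ordered_Gleason_space} in turn, together with the announced characterization of $\prec$. That $(\Prim(L),\leq)$ is already an $f$-space is immediate since $L$ is a frame (Lemma~\ref{lem_to_f_space}). Among the relational axioms, the easy ones come first. Closedness of $R$: if $(x,y)\notin R$, pick $a\in\Uprec x\setminus y$, so $s\prec a$ for some $s\in x$ and $a\notin y$; then $\eta(s)\times\eta(a)^c$ is an open neighbourhood of $(x,y)$ missing $R$, since $s\in x'$ forces $a\in\Uprec x'$. Reflexivity of $R$ follows from $a\prec b\Rightarrow a\leq b$ (axiom~S6), which gives $\Uprec x\subseteq x$ whenever $x$ is a filter; the same axiom yields that $\leq$ is contained in $R$, i.e.\ item~$2'$ and hence item~$2$. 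Transitivity uses interpolation (axiom~S8): given $\Uprec x\subseteq y$ and $\Uprec y\subseteq z$, for $a\in\Uprec x$ pick $s\in x$ with $s\prec a$ and interpolate $s\prec c\prec a$; then $c\in\Uprec x\subseteq y$, hence $a\in\Uprec y\subseteq z$.

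For the substantive axiom (item~4), observe that $R[x,-]\subseteq\eta(a)$ means that $a$ belongs to every prime filter extending $\Uprec x$. Since $\Uprec x$ is a filter (meet closure via S2 with S4, upward closure via S4), the prime filter theorem for distributive lattices gives $\Uprec x=\bigcap\{y\in X\mid\Uprec x\subseteq y\}$, so $R[x,-]\subseteq\eta(a)$ is equivalent to $a\in\Uprec x$. Hence
\[ R[-,\eta(a)^c]^c \;=\; \{x\in X\mid a\in\Uprec x\} \;=\; \bigcup\{\eta(s)\mid s\prec a\}. \]
Taking closures and combining Lemma~\ref{lem_to_f_space}(2)--(3) with S5,
\[ \cl\bigl(R[-,\eta(a)^c]^c\bigr) \;=\; \bigvee\{\eta(s)\mid s\prec a\} \;=\; \eta\Bigl(\bigvee\{s\mid s\prec a\}\Bigr) \;=\; \eta(a), \]
which is item~4, since every clopen upset has the form $\eta(a)$.

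For the ``furthermore'' clause, the forward direction is immediate: if $a\prec b$ and $a\in x$ with $x\mathrel{R}y$, then $b\in\Uprec x\subseteq y$. For the converse I argue contrapositively. Suppose $a\not\prec b$. The set $\Dprec b$ is an ideal (by S3 and S4) not containing $a$, so the prime filter theorem produces a prime filter $x$ with $a\in x$ and $x\cap\Dprec b=\emptyset$. A short argument using S4 then shows $\Uprec x\cap{\downarrow}b=\emptyset$, so a second application of the prime filter theorem yields a prime filter $y\supseteq\Uprec x$ with $b\notin y$. The pair $(x,y)$ witnesses $x\in\eta(a)$, $x\mathrel{R}y$ and $y\notin\eta(b)$, giving $R[\eta(a),-]\not\subseteq\eta(b)$. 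The principal obstacle is item~4: it is the only step that genuinely combines the frame structure on $L$ (via Lemma~\ref{lem_to_f_space}) with S5 and with the filter property of $\Uprec x$; every other item merely translates a subordination axiom to the dual side.
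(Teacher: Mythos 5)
Your proof is correct and follows essentially the same route as the paper: the routine axioms are read off from S1--S4, S6 and S8, and the substantive item~4 is reduced to S5 via the identification $R[-,\eta(a)^c]^c=\bigcup\lbrace\eta(s)\mid s\prec a\rbrace$ combined with Lemma~\ref{lem_to_f_space}. You merely supply details the paper leaves to the reader (notably the prime-filter-theorem argument for the converse of the ``furthermore'' clause), and your ordering has the small advantage of establishing $R[x,-]\subseteq\eta(a)\iff a\in\Uprec x$ before using it, where the paper's sketch invokes the ``furthermore'' equivalence inside the verification of item~4.
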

\begin{proof}[Sketch of the proof]
To prove Items 1 and 2 of Definition \ref{def_ordered_Gleason_space}, one just has to use the subordination part of a proximity relation (see Definition \ref{def_proximity_frame_and_morphism}). Also, one can show that $R$ is reflexive if and only if $\prec$ satisfies S6 and transitive if and only $\prec$ satisfies S8. Let us prove item 5 (which is equivalent to S5.). We have $a = \bigvee \lbrace b \mid b \prec a \rbrace$ if and only if $\eta(a) = \eta\left( \bigvee \lbrace b \mid b \prec a \rbrace \right)$. Then, by \cite[Theorem 1.5]{Pultr}, it comes that
 \begin{align*}
 \eta\left( \bigvee \lbrace b \mid b \prec a \rbrace \right) &=  \cl \left( \bigcup \lbrace \eta(b) \mid b \prec a \rbrace \right ) \\
 & = \cl \left( \bigcup \lbrace \eta(b) \mid R[\eta(b),-] \subseteq \eta(a) \rbrace \right) \\
 &= \cl \left( \bigcup \lbrace \eta(b) \mid \eta(b) \subseteq R[-,\eta(a)^c]^c \rbrace \right).
 \end{align*}
 Finally, since $R[-,\eta(a)^c]^c$ is an open upset , it follows that 
 \[ \eta\left( \bigvee \lbrace b \mid b \prec a \rbrace \right) = \cl \left( R[-,\eta(a)^c]^c \right), \] and the conclusion is clear.
\end{proof}

On the other hand, let $(X,\leq,R)$ be an ordered Gleason space, its dual is given by $(L,\prec)$ where $L:=\uOf(X)$ is the Priestley dual of $X$ and $\prec$ is the binary relation on $L$ defined by 
\begin{equation}\label{eq_def_of_prec} O \prec U \text{ if and only if } R[O,-] \subseteq U. \end{equation}

\begin{lemma} Endowed with the relation $\prec$ defined in \eqref{eq_def_of_prec}, $\uOf(X)$ is a proximity frame. 
\end{lemma}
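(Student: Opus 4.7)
The plan is to verify the frame condition on $L = \uOf(X)$ and then check the axioms S1--S6 and S8 of Definition 1.4 for the relation $\prec$ defined by $O \prec U$ iff $R[O,-] \subseteq U$. That $L$ is a frame is immediate from $(X,\leq)$ being an $f$-space (Pultr--Sichler; cf.\ Lemma 1.1). Axioms S1--S4 are straightforward set-theoretic manipulations: S1 follows from $R[\emptyset,-]=\emptyset$ and $R[X,-]\subseteq X$; S2 from the obvious $R[O,-]\subseteq U\cap V$ whenever $R[O,-]\subseteq U$ and $R[O,-]\subseteq V$; S3 from $R[O\cup U,-]=R[O,-]\cup R[U,-]$; and S4 from monotonicity of $R[-,-]$ in its first argument. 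Axiom S6 is equally immediate: reflexivity of $R$ (item 3 of Definition 1.4) gives $O\subseteq R[O,-]\subseteq U$.

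For S5 I would combine property 4 of Definition 1.4 with item 2 of Lemma 1.1. Observing that $U\prec O$ iff $U\subseteq R[-,O^c]^c$ (by Remark 1.3), the set $\{U\in L\mid U\prec O\}$ is exactly the set of clopen upsets contained in the open upset $R[-,O^c]^c$ (open since $R$ is closed and $O^c$ is closed; an upset by the duality of Remark 1.3 together with axiom 2). Since clopen upsets form a basis of the open-upset topology of a Priestley space, the frame-theoretic join of this family (i.e.\ the closure of its union, by Lemma 1.1) equals $\cl(R[-,O^c]^c)$, which by property 4 of Definition 1.4 is $O$.

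The main obstacle is S8. Given $O\prec U$, i.e.\ $R[O,-]\subseteq U$, I need a clopen upset $V$ satisfying $R[O,-]\subseteq V$ and $R[V,-]\subseteq U$, the latter meaning $V\subseteq R[-,U^c]^c$. The transitivity of $R$ (again item 3) yields $R[O,-]\subseteq R[-,U^c]^c$: if $y\in R[O,-]$ via $x\in O$ with $x\mathrel{R}y$, and $y\mathrel{R}z$, then $x\mathrel{R}z$ by transitivity, so $z\in R[O,-]\subseteq U$, showing $y\notin R[-,U^c]$. Moreover $R[O,-]$ is a closed upset (closed because $R$ is closed in $X^2$ and $O$ is compact, an upset by item 2 of Definition 1.4). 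The job is then to find $V$ with $R[O,-]\subseteq V\subseteq R[-,U^c]^c$.

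The final step is the standard Priestley separation argument: in a Priestley space the clopen upsets form a basis for the open-upset topology, so $R[-,U^c]^c$ is the union of the clopen upsets it contains; by compactness of the closed set $R[O,-]$, finitely many such clopen upsets already cover it, and their union is the required $V$. Altogether this yields $O\prec V\prec U$, completing S8 and hence the proof that $(\uOf(X),\prec)$ is a proximity frame.
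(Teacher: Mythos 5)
Your proof is correct and fills in exactly the argument the paper leaves to the reader (it gives no proof of this lemma, deferring to \cite{Castro} and \cite{Sourabh}): S1--S4 from the closedness and monotonicity of $R$, S6 from reflexivity, S8 from transitivity plus Priestley separation, and S5 from condition 4 of the definition together with the description of joins in $\uOf(X)$ as closures of unions. This mirrors, in the reverse direction, precisely the correspondences the paper sketches for the dual lemma (reflexivity $\leftrightarrow$ S6, transitivity $\leftrightarrow$ S8, $O=\cl(R[-,O^c]^c)$ $\leftrightarrow$ S5), so it is essentially the intended approach.
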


To conclude the section, it remains to determine the counterpart of the proximity morphisms on Gleason spaces. Let $L$ and $M$ be proximity frames and $X$ and $Y$ their respective Priestley duals. If $h : L \longrightarrow M$ is a meet-hemimorphism, then the relation $\rho \subseteq Y \times X$ defined by 
\begin{equation}\label{eq_def_of_rho} y \mathrel{R} x \text{ if and only if } h^{-1}(y) \subseteq x \end{equation} satisfy the following conditions:
\begin{enumerate}
\item $y_1 \leq y_2 \mathrel{\rho} x_1 \leq x_2$ implies $y_1 \mathrel{\rho} x_2$,
\item $\rho$ is closed in $Y \times X$,
\item  $O \in \uOf(X)$ implies $\rho[-,O^c]^c \in \uOf(Y)$.
\end{enumerate}

Since in our case, we have \emph{strong} meet-hemimorphism, the relation $\rho$ also satisfies 
\begin{enumerate}
\item[4.] for every $y \in Y$, there exists $x \in \rho[y,-]$.  
\end{enumerate}
We call such a relation $\rho$ a \emph{strong meet-hemirelation}. By \cite[Lemma 2]{Sofronie}, we know that strong meet-hemimorphism are in correspondence with strong meet-hemirelation. Hence, it remains to characterize the properties H1 and H2 of proximity morphisms. A key concept  towards this characterization is defined below.

\begin{definition} Let $(X,\leq,R)$ be an ordered Gleason space and $S$ a subset of $X$. An element $x \in S$ is said to be \emph{$R$-minimal in $S$} if for every $y \in S$, $y \mathrel{R} x$ implies $x \mathrel{R} y$. 
\end{definition}

\begin{proposition}\label{prop_exist_R_min} Let $(X,R)$ be an ordered Gleason space and $F$ be a closed subset of $X$, then for every element $x \in F$ there exists an element $y$  $R$-minimal in $F$ such that $y \mathrel{R} x$.
\end{proposition}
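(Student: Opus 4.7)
The approach I would take is a Zorn's lemma argument exploiting that $R$ is a closed preorder on the compact Hausdorff space $X$. As a first reduction, I would replace $F$ by the closed set $A := F \cap R[-,x]$. This set is closed because $R[-,x]$ is closed by Remark \ref{rem_useful_starting_rem} applied to the closed singleton $\{x\}$, and it is nonempty because reflexivity of $R$ (item 3 of Definition \ref{def_ordered_Gleason_space}, or equivalently item 2') places $x$ inside. A key observation to record at this stage is that any $R$-minimal element of $A$ is automatically $R$-minimal in $F$: if $z \in F$ satisfies $z \mathrel{R} y$ for some $y \in A$, then $y \mathrel{R} x$ together with transitivity yields $z \mathrel{R} x$, so $z \in A$.

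Second, I would apply Zorn's lemma to the family of $R$-chains of $A$ (subsets in which any two elements are $R$-comparable), ordered by inclusion; unions of nested chains remain chains, so a maximal $R$-chain $C^* \subseteq A$ exists. The core compactness step is to show that $\bigcap_{c \in C^*}\bigl(A \cap R[-,c]\bigr)$ is nonempty. Each $A \cap R[-,c]$ is closed in $X$. Among any finitely many $c_1, \ldots, c_n \in C^*$, the total $R$-order on $C^*$ supplies an $R$-least element $c_i$, which by reflexivity and transitivity lies in every $A \cap R[-,c_j]$; this is the finite intersection property. Compactness of $X$ then delivers some $y$ in the full intersection, in particular $y \in A$ (so $y \mathrel{R} x$) and $y \mathrel{R} c$ for all $c \in C^*$.

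Finally, I would verify that this $y$ is $R$-minimal in $A$, hence in $F$. Take $z \in F$ with $z \mathrel{R} y$; the opening observation gives $z \in A$, and transitivity applied to each $y \mathrel{R} c$ ($c \in C^*$) yields $z \mathrel{R} c$. Hence $C^* \cup \{z\}$ is again an $R$-chain in $A$, so by maximality of $C^*$ we have $z \in C^*$, whereupon $y \in R[-,z]$ gives precisely $y \mathrel{R} z$, as required. The main obstacle is really one of bookkeeping rather than ideas: one must ensure reflexivity and transitivity of $R$ are invoked in the right places, and check that the tentative chain extension $C^* \cup \{z\}$ is genuinely $R$-comparable with every earlier element of $C^*$, which works because comparability is transmitted through $y$ by transitivity.
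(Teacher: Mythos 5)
Your proof is correct and follows essentially the same route as the paper's: Zorn's lemma applied to $R$-chains, followed by compactness via the finite intersection property of the closed sets $R[-,c]$, and maximality of the chain to conclude minimality. The only cosmetic difference is that you first restrict to $A = F \cap R[-,x]$ to guarantee $y \mathrel{R} x$, whereas the paper instead requires every chain to contain $x$; both devices accomplish the same thing.
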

\begin{proof} We follow the lines of the proof for po-sets (see for instance \cite[Proposition VI.5-3.]{Compendium})
Let us define a chain of $(X,R)$ to be a subset $C$ of $X$ such that for every $x,y\in C$, we have $x \mathrel{R} y$ or $y \mathrel{R} x$. 

We denote by $\mathfrak{C}$ the set of chains $C$ satisfying $x \in C \subseteq F$, ordered by inclusion. We have that $\mathfrak{C}$ is non-empty (by reflexivity of $R$, it contains the chain $\lbrace x \rbrace$) and a classical argument suffices to prove it is also inductive. Hence, $\mathfrak{C}$ admits a maximal element $M$. 

Since $\lbrace R[-,z] \cap F \mid z \in M \rbrace$ is a family of closed sets which satisfies the finite intersection properties (because $M$ is a chain contained in $F$ and $R$ is a pre-order),  we know by compactness that there exists an element $y \in F$ such that $y \mathrel{R} z$ for all $z \in M$. 

Now, suppose that $t$ is an element of $F$ such that $t \mathrel{R} y$. By transitivity, we have that $\lbrace t \rbrace \cup M$ is a chain of $\mathfrak{C}$.  By maximality of $M$, we have $t \in M$ and, therefore, we have $y \mathrel{R} t$, so that $y$ is indeed $R$-minimal in $F$, as required.
\end{proof}

 Let us highlight that the notion of $R$-minimal element is also present in the Boolean setting, while hidden. Indeed, in the Boolean case, the relation $R$ turns out to be an equivalence relation, so that every element is actually $R$-minimal. 

\begin{proposition}\label{Prop_towards_ofc} Let $h: L \longrightarrow M$ be a strong meet-hemimorphism between two proximity frame and $\rho \subseteq Y \times X$ its associated strong hemi-relation: 
\begin{enumerate}
\item $h$ satisfies H1 if and only if for every $y_1, y_2 \in Y$, every $x_1$ R-minimal in $\rho[y_1,-]$ and every $x_2 \in X$, we have 
%\[ \left. \begin{array}{r}
%y_1 \mathrel{R} y_2 \\
%y_1 \mathrel{\rho} x_1 \\
%y_2 \mathrel{\rho} x_2
%\end{array}  \right\rbrace \Rightarrow x_1 \mathrel{R} x_2, \]
\[ x_1 \mathrel{\rho^{-1}} y_1 \mathrel{R} y_2 \mathrel{\rho} x_2 \text{ implies } x_1 \mathrel{R} x_2. \]
\item $h$ satisfies H2 if and only if $ \rho[-,O^c] = \int\left(\rho[-,R[-,O^c]]\right)$ for every $O \in \uOf(X)$.
\end{enumerate}
\end{proposition}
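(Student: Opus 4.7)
The plan is to translate both conditions through the correspondence $\eta_M(h(a)) = \rho[-,\eta_L(a)^c]^c$ that holds for any strong meet-hemimorphism $h$ and its associated hemirelation $\rho$, and then to exploit the existence of $R$-minimal elements guaranteed by Proposition \ref{prop_exist_R_min}.

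For item 2 the computation is direct. Fix $O = \eta_L(a)$. Apply $\eta_M$ to H2 and use Lemma \ref{lem_to_f_space}(2)--(3) (frame joins in $\uOf(Y)$ are closures of unions, and $\eta_M$ preserves arbitrary joins) to rewrite H2 as
\[
\rho[-,O^c]^c = \cl\bigl( \bigcup\lbrace \rho[-,V^c]^c \mid V \in \uOf(X),\ V \subseteq R[-,O^c]^c \rbrace \bigr).
\]
Since every $\rho[y,-]$ is closed hence compact, and $R[-,O^c]^c$ is an open upset (using Remark \ref{rem_useful_starting_rem}), any $\rho[y,-]$ contained in $R[-,O^c]^c$ is already contained in some clopen upset $V \subseteq R[-,O^c]^c$. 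This collapses the union to $\rho[-,R[-,O^c]]^c$. Taking complements and applying $\cl(A^c) = \int(A)^c$ rewrites the equality as $\rho[-,O^c] = \int(\rho[-,R[-,O^c]])$, as required.

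For the easy direction of item 1, assume the relational condition and take $a_i \prec b_i$, i.e.\ $R[\eta(a_i),-] \subseteq \eta(b_i)$. I will show $h(a_1 \vee a_2) \prec h(b_1) \vee h(b_2)$ dually: for $y_1 R y_2$ with $\rho[y_1,-] \subseteq \eta(a_1) \cup \eta(a_2)$, I must derive $\rho[y_2,-] \subseteq \eta(b_1) \cup \eta(b_2)$. Given $x_2 \in \rho[y_2,-]$, Proposition \ref{prop_exist_R_min} supplies an $R$-minimal $x_1 \in \rho[y_1,-]$; the hypothesis yields $x_1 R x_2$, and since $x_1 \in \eta(a_j)$ for some $j$, $x_2 \in R[\eta(a_j),-] \subseteq \eta(b_j)$.

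For the converse, I argue contrapositively: given $y_1 R y_2$, an $R$-minimal $x_1 \in \rho[y_1,-]$, $x_2 \in \rho[y_2,-]$, and $x_1 \not R x_2$, I will produce $a_i \prec b_i$ with $h(a_1 \vee a_2) \in y_1$ and $h(b_i) \notin y_2$ for both $i$, contradicting H1 at the pair $y_1 R y_2$. Transitivity gives $R[x_1,-] \cap R[-,x_2] = \emptyset$, so Priestley normality supplies a clopen upset $\eta(a_1)$ with $R[x_1,-] \subseteq \eta(a_1) \subseteq R[-,x_2]^c$ and then a clopen upset $\eta(b_1) \supseteq R[\eta(a_1),-]$ avoiding $x_2$; this is the first pair $a_1 \prec b_1$. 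The main obstacle will be constructing the second pair: I need $\eta(a_2) \supseteq \rho[y_1,-] \setminus \eta(a_1)$ with $a_2 \prec b_2$ and some element of $\rho[y_2,-]$ outside $\eta(b_2)$. Here $R$-minimality of $x_1$ is indispensable: by its contrapositive every $x \in \rho[y_1,-] \setminus R[x_1,-]$ is $R$-incomparable with $x_1$, and combining this with item 4 of Definition \ref{def_ordered_Gleason_space} permits the separation of $\rho[y_1,-] \setminus \eta(a_1)$ from a suitable element of $\rho[y_2,-]$ via $R$. Once both pairs are in hand, H1 forces $h(a_1 \vee a_2) \prec h(b_1) \vee h(b_2)$; combined with $y_1 R y_2$ and $h(a_1 \vee a_2) \in y_1$ this gives $h(b_1) \vee h(b_2) \in y_2$, whence some $h(b_i) \in y_2$ by primeness of $y_2$, contradicting the construction.
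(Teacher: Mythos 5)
Your treatment of item 2 is correct and in fact supplies the details that the paper merely cites from the Boolean case. The trouble is concentrated in item 1. Even in the direction you call easy (ofc implies H1), the target is misstated: $h(a_1\vee a_2)\prec h(b_1)\vee h(b_2)$ requires every $y_2$ with $y_1\mathrel{R}y_2$ to lie in $\eta(h(b_1))\cup\eta(h(b_2))$, i.e.\ to satisfy $\rho[y_2,-]\subseteq\eta(b_1)$ \emph{or} $\rho[y_2,-]\subseteq\eta(b_2)$; the condition $\rho[y_2,-]\subseteq\eta(b_1)\cup\eta(b_2)$ that you aim for only places $y_2$ in $\eta(h(b_1\vee b_2))$, which is in general strictly larger because $h$ is only a meet-hemimorphism and need not preserve joins. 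Your argument, which picks the $R$-minimal $x_1$ \emph{after} $x_2$, delivers only this weaker conclusion, since the index $j$ may vary with $x_2$. The repair is exactly what the paper does: fix a single $R$-minimal $x_1\in\rho[y_1,-]$ once and for all, so that $j$ is fixed, and then the same $\eta(b_j)$ absorbs all of $\rho[y_2,-]$.

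The direction H1 implies ofc is where the real gap lies. You construct the pair $a_1\prec b_1$ correctly, but the second pair $(a_2,b_2)$ --- which you yourself flag as ``the main obstacle'' --- is never produced; the sentence invoking $R$-minimality and item 4 of Definition \ref{def_ordered_Gleason_space} is not an argument. Concretely, you would need some $w\in\rho[y_2,-]$ lying outside a clopen upset $\eta(b_2)\supseteq R[\eta(a_2),-]$ with $\eta(a_2)\supseteq\rho[y_1,-]\setminus\eta(a_1)$; but the failure of ofc says nothing about the non-$R$-minimal elements of $\rho[y_1,-]$, which may perfectly well be $R$-below every element of $\rho[y_2,-]$, so it is unclear which $w$ escapes and why one should exist. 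The paper does not attempt this topological separation at all: it derives the implication from Proposition \ref{prop_ok_funct}, namely the identity $\Uprec\left(h^{-1}\left(\Uprec y\right)\right)=\Uprec x$ for $x$ $R$-minimal in $\rho[y,-]$, whose proof interpolates $a_1\prec c_1\prec d_1\prec e_1\prec b$, uses H1 to show that $h^{-1}(y)$ is disjoint from the ideal generated by $c_1$ and $\Dprec x^c$, and extracts via the prime filter theorem a $z\in\rho[y,-]$ contradicting the $R$-minimality of $x$. That filter-theoretic content, or a genuine substitute for it, is what is missing from your proposal.
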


The proof of Item 2 is almost identical to the one in the Boolean case. Therefore, we redirect the reader to \cite[Lemma 6.11]{Sourabh} for more details. The proof of Item 1 requires additional results. In the meantime, we name \emph{ordered forth condition} (shortened as \emph{ofc}) and \emph{de Vries condition} (shortened as \emph{dvc}) the conditions of the first and the second item of Proposition \ref{Prop_towards_ofc}.

Before we start, let us note that 
\begin{equation}\label{eq_rem} \rho[-,\eta(a)^c]^c = \eta(h(a)). \end{equation}
Indeed, it is clear that $\eta(h(a)) \subseteq \rho[-,\eta(a)^c]^c.$ Now, suppose that $y \in \rho[-,\eta(a)^c]^c$. Then, for every $x \in \eta(a)^c$, we have that $h^{-1}(y) \not\subseteq x$. Hence, for all $x \in \eta(a)^c$, we have that $h(a_x) \in y$ and $a_x \nin x$ for some $a_x \in L$. In particular, $\lbrace \eta(a_x)^c \mid x \in \eta(a)\rbrace$ is an open cover of $\eta(a)^c$ which is compact. Then, we now that there exist $x_1,\ldots,x_n \in \eta(a)^c$ such that 
\[ \eta(a_{x_1} \wedge \cdots \wedge a_{x_n})^c \supseteq \eta(a)^c. \] Moreover, we have ($y$ is a filter) 
\[ y \ni h(a_{x_1}) \wedge \cdots \wedge h(a_{x_n}) = h(a_{x_1} \wedge \cdots \wedge a_{x_n}) \geq h(a) \]
and the conclusion is clear.

\begin{proposition}\label{prop_ok_funct}
Let $L,M$ be two proximity frames, $h: L \longrightarrow M$ be a proximity morphism , $y$ a prime filter of $M$ and $x$ a prime filter which is $R$-minimal in $\rho[y,-]$. Then, we have 
\[ \Uprec\left( h^{-1}\left( \Uprec y \right) \right) = \Uprec x. \]
\end{proposition}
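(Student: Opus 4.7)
The plan is to prove the two inclusions of the claimed equality separately. The inclusion $\Uprec h^{-1}(\Uprec y) \subseteq \Uprec x$ uses only $x \in \rho[y,-]$, that is, $h^{-1}(y) \subseteq x$: given $b \in \Uprec h^{-1}(\Uprec y)$, there exist $c \prec b$ and $s \in y$ with $s \prec h(c)$; applying S6 to $s \prec h(c)$ places $h(c)$ in the filter $y$, so $c \in h^{-1}(y) \subseteq x$, and combining with $c \prec b$ yields $b \in \Uprec x$.

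The reverse inclusion $\Uprec x \subseteq \Uprec h^{-1}(\Uprec y)$ is where $R$-minimality will really be used. As an intermediate step I would show that $\Uprec x \subseteq \Uprec x'$ holds for \emph{every} $x' \in \rho[y,-]$, and not merely for those satisfying $x' \mathrel{R} x$. It will suffice to produce a common $R$-predecessor $x^{**} \in \rho[y,-]$ of $x$ and $x'$, since $R$-minimality of $x$ would then force $x \mathrel{R} x^{**}$, hence $\Uprec x \subseteq \Uprec x^{**} \subseteq \Uprec x'$. To build $x^{**}$ I would apply the prime filter theorem to the filter $h^{-1}(y)$ and the ideal $J$ of $L$ generated by $\Dprec x^c \cup \Dprec x'^c$ (each summand is easily checked to be an ideal via S3, S4 and primeness of $x, x'$). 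Assuming some $e \in h^{-1}(y) \cap J$, write $e \leq c_1 \vee c_2$ with $c_1 \prec d_1 \notin x$ and $c_2 \prec d_2 \notin x'$; monotonicity of $h$ gives $h(e) \leq h(c_1 \vee c_2)$, H1 gives $h(c_1 \vee c_2) \prec h(d_1) \vee h(d_2)$, and S4 then yields $h(e) \prec h(d_1) \vee h(d_2)$. Since $h(e) \in y$, the element $h(d_1) \vee h(d_2)$ lies in $\Uprec y$, hence in $y$ by S6, and primeness of $y$ puts either $h(d_1)$ or $h(d_2)$ in $y$; each alternative contradicts $d_1 \notin x \supseteq h^{-1}(y)$ or $d_2 \notin x' \supseteq h^{-1}(y)$.

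To close the proof, I would fix $b \in \Uprec x$; by the intermediate claim, $\rho[y,-] \subseteq \bigcup_{a \prec b}\eta(a)$. Since $\rho[y,-]$ is a closed, hence compact, subset of $\Prim(L)$ and each $\eta(a)$ is clopen, there is a finite subcover; setting $a^* := a_1 \vee \cdots \vee a_n$, iterated use of S3 and S4 yields $a^* \prec b$, and $\rho[y,-] \subseteq \eta(a^*)$ means $a^* \in \bigcap_{x' \in \rho[y,-]} x' = h^{-1}(y)$. Interpolating by S8 to $a^* \prec a^{**} \prec b$ and applying H1 produces $h(a^*) \prec h(a^{**})$; since $h(a^*) \in y$, this exhibits $h(a^{**}) \in \Uprec y$, so $a^{**}$ is the required witness that $b \in \Uprec h^{-1}(\Uprec y)$.

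The delicate point will be checking the disjointness $h^{-1}(y) \cap J = \emptyset$ in the intermediate claim: this is precisely where axiom H1 compensates for the fact that the meet-hemimorphism $h$ need not preserve joins. Once that check is secured, the remainder is a routine combination of the proximity axioms, primeness of $y$, and the compactness of $\rho[y,-]$.
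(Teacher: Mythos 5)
Your argument is correct, and it reaches the conclusion by a genuinely different decomposition than the paper's, even though both proofs run on the same technical engine: separate the filter $h^{-1}(y)$ from a suitable ideal built from sets of the form $\Dprec(\cdot)$ via the prime filter theorem, use H1 together with the primeness of $y$ to secure the disjointness, and then invoke the $R$-minimality of $x$. The paper proceeds by contradiction: it picks a single witness $a_1 \prec b$ with $b \notin \Uprec(h^{-1}(\Uprec y))$, interpolates $a_1 \prec c_1 \prec d_1 \prec e_1 \prec b$, and separates $h^{-1}(y)$ from the ideal generated by $\lbrace c_1 \rbrace \cup \Dprec x^c$, so that the resulting prime filter $z$ simultaneously invalidates the $R$-minimality of $x$ and omits $c_1 \in \Uprec x$. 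You instead prove directly the stronger intermediate fact that $x \mathrel{R} x'$ for \emph{every} $x' \in \rho[y,-]$, by adjoining $\Dprec x'^c$ (rather than a specific interpolant) to the ideal, and then convert this pointwise information into the desired inclusion $\Uprec x \subseteq \Uprec(h^{-1}(\Uprec y))$ through a compactness argument in $\Prim(L)$: a finite subcover of $\rho[y,-]$ by the $\eta(a)$ with $a \prec b$, followed by S3, the identification of $h^{-1}(y)$ with the intersection of the primes above it, one interpolation via S8, and H1. What your route buys is the intermediate claim itself, which shows that all $R$-minimal elements of $\rho[y,-]$ lie in a single $\equiv$-class and thus anticipates the uniqueness statement of Lemma \ref{lem_exist_x_mini_in_f_u}; what the paper's route buys is brevity, since a single application of the prime filter theorem and no covering argument are needed. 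Both proofs use $R$-minimality only for the inclusion $\Uprec x \subseteq \Uprec(h^{-1}(\Uprec y))$, and your treatment of the easy inclusion coincides with the paper's.
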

\begin{proof}
On the one hand,  $ h^{-1}\left( \Uprec y \right)  \subseteq x$ follows from $x \in \rho[y,-]$. Consequently, we have $\Uprec\left( h^{-1}\left( \Uprec y \right) \right) \subseteq \Uprec x$.

On the other hand, suppose that $\Uprec x \not\subseteq \Uprec\left( h^{-1}\left( \Uprec y \right) \right)$. Then, there exist $a_1 \in x$ and $b \in L$ such that $a \prec b$ and $b \nin \Uprec\left( h^{-1}\left( \Uprec y \right) \right)$. By the properties of proximity relations, we know that $a_1  \prec c_1 \prec d_1 \prec e_1 \prec b$ for some $c_1,d_1,e_1 \in L$. In particular, we have $h(d_1) \prec h(e_1)$ and $e_1 \nin h^{-1}\left( \Uprec y \right)$. Therefore, we also have that $h(d_1) \nin y$. 

In order to obtain an absurdity and conclude the proof, we are going to invalidate the $R$-minimality of $x$ in $\rho[y,-]$.

We first prove that 
\begin{equation}\label{eq_mid_proof} h^{-1}(y) \cap \langle c_1 \cup \Dprec x^c \rangle_{id} = \emptyset, \end{equation} where $\langle c_1 \cup \Dprec x^c \rangle_{id}$ is the lattice ideal generated by $c_1 \cup \Dprec x^c$. Suppose this is not the case. Then, there exist $a_2,c_2 \in L$ and $d_2 \in x^c$ such that $h(a_2) \in y$, $c_2 \prec d_2$ and $a_2 \leq c_1 \vee c_2$. It follows from the properties of $h$ that 
\[y \ni h(a_2) \leq h(c_1 \vee c_2) \prec h(d_1) \vee  h(d_2). \]
Now, since $y$ is a prime filter and $h(d_1) \nin y$, we have that $h(d_2) \in y$. Hence, we have $d_2 \in h^{-1}(y) \subseteq x$, which is absurd. Consequently, \eqref{eq_mid_proof} is satisfied and we have $h^{-1}(y) \subseteq z$, $c_1 \nin z$ and $\Uprec z \subseteq x$ for some prime filter $z$. In other words, we have $z \mathrel{R} x$ and $y \mathrel{\rho} z$. Now, by $R$-minimality of $x$ in $\rho[y,-]$, it follows that $x \mathrel{R} z$. Hence, in particular, we should have 
\[ c_1 \in \Uprec a_1 \subseteq \Uprec x \subseteq z, \] which is absurd. 
\end{proof}

We now have the required result to finish the proof of Proposition \ref{Prop_towards_ofc}.

\begin{proof}[Proof of Proposition \ref{Prop_towards_ofc}]
For the only if part, suppose that $h^{-1}(y_1) \subseteq x_1$, $h^{-1}(y_2) \subseteq x_2$ and $\Uprec y_1 \subseteq y_2$. In particular, by Proposition \ref{prop_ok_funct}, we have $\Uprec \left( h^{-1}\left( \Uprec y_1 \right) \right) = \Uprec x_1$. It comes that
\[ \Uprec x_1 = \Uprec \left( h^{-1}\left( \Uprec y_1 \right) \right) \subseteq h^{-1}\left( \Uprec y_1 \right) \subseteq h^{-1}(y_2) \subseteq x_2,\] or, in other words, that $x_1 \mathrel{R} x_2$, as required.

For the if part, let $a_1,a_2,b_1$ and $b_2$ be elements of $L$ such that $a_1 \prec b_1$ and $a_2 \prec b_2$. To prove that $h$ satisfies H1 is to prove that 
\[ R[\eta(h(a_1 \vee a_2)),-] \subseteq \eta(h(b_1)) \cup \eta(h(b_2)). \]
We can use \eqref{eq_rem} to rewrite this inclusion as 
\[ \underbrace{R[\rho[-,\eta(a_1 \vee a_2)^c]^c,-]}_{:=A} \subseteq \underbrace{\rho[-,\eta(b_1)^c]^c \cup \rho[-,\eta(b_2)^c]^c}_{:=B}. \]
Let $y_2 \in A$. Then, there exists $y_1$ such that $y_1 \mathrel{R} y_2$ and such that $ \rho[y_1,-] \subseteq  \eta(a_1 \vee a_2)$. Moreover, by Proposition \ref{prop_exist_R_min}, we know that there exists a filter $x_1$ $R$-minimal in $\rho[y_1,-]$. Hence, we may suppose, without loss of generality, that $a_1 \in x_1$. Let $x_2$ be a prime filter such that $y_1 \mathrel{\rho} x_2$. By the ofc, we know that $x_1 \mathrel{R} x_2$ and it follows that \[ b_1 \in \Uprec a_1 \subseteq \Uprec x_1 \subseteq x_2 .\] Hence, we proved that for every $x_2$ such that $y_2 \mathrel{\rho} x_2$, we have $x_2 \in \eta(b_1)$, that is $y_2 \in \rho[-,\eta(b_1)^c]^c \subseteq B$, as required.
\end{proof}

Now that we characterized the strong meet-hemirelation that stemmed from proximity morphisms, we have to determine how to compose them to actually obtain category dual to \textbf{PrFrm}. As it was already noted in \cite{Sourabh}, the rule of composition of meet-hemirelations is not easily described, even in the Boolean setting and we must rely on their associated meet-hemimorphisms.

\begin{definition}\label{def_comp_of_relations} Let $\rho_1$ and $\rho_2$ be meet-hemirelations and $h_1$, $h_2$ their associated meet-hemimorphisms. We define the \emph{composition} $\rho_1 \star \rho_2$ as the meet-hemirelation associated to $h_2 \star h_1$. 
\end{definition}

With all of the above observations, the next definition and theorem come as no surprise.

\begin{definition} We denote by \textbf{OGlSp} the category whose objects are ordered Gleason spaces and whose morphisms are strong meet-hemirelations which satisfy the ofc and the dvc, with the composition of Definition \ref{def_comp_of_relations}. For the record, let us note that the identity morphisms in \textbf{OGlSp} are given by the order relations of the ordered Gleason spaces.
\end{definition}

\begin{theorem}\label{thm_dual_Gle_Pr}
The categories \textbf{OGlSp} and \textbf{PrFrm} are dual to each other.
\end{theorem}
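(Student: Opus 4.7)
The proof assembles the pieces developed in this section into a contravariant equivalence. Define $F : \mathbf{PrFrm} \to \mathbf{OGlSp}$ on objects by $(L,\prec) \mapsto (\Prim(L),\leq,R)$ with $R$ as in \eqref{eq_def_of_R}, and on morphisms by sending $h$ to the strong meet-hemirelation $\rho$ of \eqref{eq_def_of_rho}; define $G : \mathbf{OGlSp} \to \mathbf{PrFrm}$ by $(X,\leq,R) \mapsto (\uOf(X),\prec)$ with $\prec$ as in \eqref{eq_def_of_prec}, and by transporting each strong meet-hemirelation to its associated strong meet-hemimorphism. The two lemmas just proved settle well-definedness on objects, and Proposition \ref{Prop_towards_ofc} ensures that H1 and H2 match ofc and dvc on the morphism side. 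Preservation of composition is built into Definition \ref{def_comp_of_relations}; for identities, the identity $\mathrm{id}_L$ really is the identity of $\mathbf{PrFrm}$ (unwind H2 to see $\mathrm{id}_L \star h = h$ and $g \star \mathrm{id}_L = g$), and it is sent to $\{(y,x):y\subseteq x\}$, i.e. to the order of $\Prim(L)$, which is the fixed convention for identities in $\mathbf{OGlSp}$.

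It then remains to produce natural isomorphisms $L \cong GF(L)$ and $X \cong FG(X)$. For the first, the Priestley unit $\eta_L : L \to \uOf(\Prim(L))$ is already a frame isomorphism, and the equivalence $a \prec b \Leftrightarrow R[\eta(a),-] \subseteq \eta(b)$ shown just after \eqref{eq_def_of_R} promotes it to a proximity isomorphism. For the second, the Priestley counit $\epsilon_X : x \mapsto \{O \in \uOf(X) : x \in O\}$ is already an order homeomorphism; the only content to check is that it intertwines $R$ with the relation $R'$ defined on $\Prim(\uOf(X))$ via \eqref{eq_def_of_R}.

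This last intertwinement is the step I expect to be the main obstacle. The direction $x R y \Rightarrow \Uprec \epsilon_X(x) \subseteq \epsilon_X(y)$ is immediate from \eqref{eq_def_of_prec}. For the converse, the key identity is
\[ R[x,-] = \bigcap \{ R[O,-] : O \in \uOf(X),\ x \in O \}. \]
The $\subseteq$ inclusion is trivial. For $\supseteq$, apply compactness of $X$ to the filtered family of non-empty closed sets $\{O \cap R[-,y]\}_{x \in O}$ (non-emptiness comes from the assumed membership in $\bigcap R[O,-]$) to produce $z \in \uparrow x$ with $z R y$, then invoke item 2 of Definition \ref{def_ordered_Gleason_space} to conclude $x R y$. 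Granted this identity, suppose $\neg(x R y)$: directedness plus compactness supply a single clopen upset $O \ni x$ with $y \notin R[O,-]$, and Priestley separation of $y$ from the closed upset $R[O,-]$ by a clopen upset $U$ yields witnesses $O \prec U$, $x \in O$, $y \notin U$ invalidating $\Uprec \epsilon_X(x) \subseteq \epsilon_X(y)$.

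Finally, naturality of $\eta$ and $\epsilon$ with respect to hemimorphisms / hemirelations is obtained by unwinding the definition of $\rho$ in \eqref{eq_def_of_rho} together with the identity \eqref{eq_rem} already used in the proof of Proposition \ref{prop_ok_funct}; no new ideas beyond the ones displayed above are required.
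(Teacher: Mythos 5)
Your proposal is correct and follows essentially the same route as the paper, which states the theorem without a written proof as the assembly of the preceding lemmas (well-definedness of $\Prim$ and $\uOf$ on objects, the Sofronie-Stokkermans correspondence for strong meet-hemimorphisms, Proposition \ref{Prop_towards_ofc} for H1/H2 versus ofc/dvc, and Definition \ref{def_comp_of_relations} for composition). The one substantive detail you add that the paper delegates to the literature is the verification that the Priestley counit intertwines $R$ with the relation induced by $\prec$ on $\Prim(\uOf(X))$, and your compactness argument via the identity $R[x,-]=\bigcap\{R[O,-]: x\in O\in\uOf(X)\}$ together with Priestley separation is sound.
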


Of course, as direct corollary of Theorem \ref{thm_dual_Gle_Pr} and \cite{Guramtriang}, the categories \textbf{OGlSp} and \textbf{StKSp} are equivalent. The scope of the next section is to describe directly this equivalence.. However, since this paper is "ordered-minded", we swap the category \textbf{StKSp} for its equivalent category \textbf{KPSp} of compact pospaces, also sometimes called Nachbin spaces.

\section{Compact pospaces}\label{Section3}

\begin{definition} A \emph{compact pospace} is a triple $(P,\pi,\leq)$ where $(P,\pi)$ is a compact space and $\leq$ is an order relation on $P$ which is closed in $P^2$.  We denote by \textbf{KPSp} the category of compact pospaces and continuous monotone maps.
\end{definition}

The equivalence between \textbf{KPSp} and \textbf{StKSp} is almost folklore (see for instance \cite[Section VI-6]{Compendium}). We recall here the basic facts. 

If $(P,\tau)$ is a stably compact space, then $(P,\pi,\leq_\tau)$ is a compact pospace where $\pi$ is the patch topology 
associated to $\tau$ and $\leq_\tau$ is the canonical order on $(P,\tau)$, that is $p \leq_\tau q$ if and only if $p \in \cl_\tau(\lbrace q \rbrace)$. In addition, we have $\pi^\uparrow = \tau$ and $\pi^\downarrow$ is the co-compact topology associated to $\tau$, that is the compact saturated sets of $\tau$. On the other hand, if $(P,\pi,\leq)$ is a compact pospace, then $(P,\pi^\uparrow)$ is a stably compact space. With these consideration in mind, we can describe the ends space $\End(L)$ of a proximity frame $L$ as a compact pospace.

\begin{proposition}\label{prop_carac_of_P_as_KPSP} Let $L$ be a proximity frame and $P:=\End(L)$ its ends space.
\begin{enumerate}
\item For $p,q \in P$, $p \leq q$ if and only if $p \subseteq q$,
\item The topology $\pi^\uparrow$ is generated by the sets $\mu(a)$ for $a \in L$ (see \eqref{eq_def_of_mu}),
\item The closed elements of $\pi^\downarrow$ are given by the sets of the form 
\[ K_{\F}:= \lbrace p \in P \mid p \subseteq \F \rbrace; \] for some round filter $\F$.
\end{enumerate}
\end{proposition}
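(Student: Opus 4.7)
The plan is to dispatch the three items independently.

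For Item~1, I unfold the canonical order $p \leq_\tau q \iff p \in \cl_\tau(\{q\})$. Since $\tau$ is subbased by $\{\mu(a) \mid a \in L\}$ (see \eqref{eq_def_of_mu}), being in the closure of $\{q\}$ means that every such basic open containing $p$ contains $q$, i.e.\ $a \in p \Rightarrow a \in q$ for every $a \in L$, which is exactly $p \subseteq q$. As the pospace order $\leq$ agrees with the specialization order $\leq_\tau$ through the patch correspondence, the claim follows. Item~2 is then a direct consequence of that same \textbf{KPSp}/\textbf{StKSp} correspondence recalled just before the proposition: one has $\pi^\uparrow = \tau$ by the very construction of the patch topology, and $\tau$ on $\End(L)$ is generated by $\{\mu(a) \mid a \in L\}$ by definition.

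Item~3 carries the real content. Under the patch correspondence the closed sets of $\pi^\downarrow$ are precisely the $\tau$-compact saturated subsets of $P$, so the task is to match these with the family $\{K_\F \mid \F \in \RF(L)\}$. I would prove the two inclusions. For a round filter $\F$, $K_\F$ rewrites as an intersection of subbasic sets indexed by $\F$, which gives its closure immediately. Compactness is the delicate point: the natural approach is to exploit that this collection is $\prec$-filtered thanks to roundness and axiom S2, and then invoke patch compactness of $(P,\pi)$ to conclude that the filtered intersection is itself compact saturated. Conversely, given a compact saturated $K$, I would define $\F := \{a \in L \mid K \subseteq \mu(a)\}$. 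Filter axioms are routine from $\mu(a \wedge b) = \mu(a) \cap \mu(b)$; roundness is the crux, and it comes out as follows: given $a \in \F$, axiom S5 together with the roundness of each end yields $\mu(a) = \bigcup_{c \prec a} \mu(c)$, compactness of $K$ extracts a finite subcover $K \subseteq \mu(c_1) \cup \cdots \cup \mu(c_n)$, and S3 then assembles $\bigvee_i c_i \prec a$ with $\bigvee_i c_i \in \F$, providing the required $\prec$-predecessor of $a$ in $\F$. The identification $K = K_\F$ follows because a saturated set in a stably compact space equals the intersection of its open neighborhoods and the $\mu(a)$'s form a subbase of $\tau$.

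The main technical obstacle is the compactness of $K_\F$ in the first half of Item~3. My preferred route is the filtered intersection argument above, which stays entirely internal to $(P,\tau)$ and leverages patch compactness; an alternative would be to pull $K_\F$ back along the natural comparison between $\End(L)$ and the Priestley dual $\Prim(L)$ constructed in the preceding section, where the analogous set is manifestly closed in the Priestley topology and hence compact.
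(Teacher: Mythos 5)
Your Items 1 and 2 are fine and essentially coincide with the paper's treatment (the paper computes $\cl_\tau(\lbrace q\rbrace)=\bigcap\lbrace\mu(a)^c\mid a\nin q\rbrace$ and declares Item 2 immediate). For Item 3 you take a genuinely different route: the paper never argues inside $(P,\tau)$ at all, but transports the whole question through the homeomorphism $\End(L)\cong\pt(\RI(L))$, the bijection between round filters and Scott-open filters of $\RI(L)$, and the Hofmann--Mislove theorem, all quoted from \cite{Guramtriang} and \cite{Compendium}, and then simplifies the resulting description to $\F\subseteq p$. (Incidentally, your computation $K_{\F}=\bigcap_{a\in\F}\mu(a)$, like the paper's own proof, identifies $K_{\F}$ with $\lbrace p\mid \F\subseteq p\rbrace$; the inclusion printed in the statement is reversed.) A direct argument would be welcome, but as written yours has two genuine gaps.

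First, compactness of $K_{\F}$: a filtered intersection of \emph{open} sets is not compact in general, and ``patch compactness'' does not apply as stated because the $\mu(a)$ are patch-open, not patch-closed. To make the idea work you must, for each $a\in\F$, use roundness to pick $b\prec c\prec a$ with $b\in\F$ and interpolate a compact saturated (equivalently, patch-closed increasing) set between $\mu(b)$ and $\mu(a)$, and then appeal to well-filteredness or sobriety of $\End(L)$ --- precisely the nontrivial facts the paper is outsourcing to \cite{Guramtriang}. Second, and more seriously, the identification $K=K_{\F}$ in the converse direction does not ``follow because the $\mu(a)$ form a subbase'': from $K\subseteq U$ compactness only yields $K\subseteq\mu(a_1)\cup\cdots\cup\mu(a_n)\subseteq U$, and the smallest single subbasic neighbourhood of that union is $\mu(a_1\vee\cdots\vee a_n)$, which need not be contained in $U$, so the sets $\mu(a)$ containing $K$ are not obviously cofinal among the open neighbourhoods of $K$. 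Closing this requires the defining property of ends: refine the cover via roundness to $K\subseteq\mu(c_1)\cup\cdots\cup\mu(c_m)$ with $c_j\prec a_{i(j)}$, and show that any end containing $\bigvee_j c_j$ contains some $a_{i(j)}$ by applying end-primeness to the round filters $\Uprec c_j$ (note $\bigcap_j\Uprec c_j\subseteq p$ by S3 and roundness of $p$). Your proposal never uses the end property anywhere in Item 3, which is a warning sign, since the claim fails for arbitrary round filters in place of ends. Finally, the fallback of pulling $K_{\F}$ back to $\Prim(L)$ is circular within this paper: the continuity and surjectivity of $x\mapsto{\Uprec x}$ are established only in Section \ref{Section3} and rest on this very proposition.
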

\begin{proof} Item 2 is immediate. For item 1, we have
\begin{align*}
& p \leq q \\
\iff & p \in \cl_\tau(\lbrace q \rbrace) = \cap \lbrace \mu(a)^c \mid q \in \mu(a)^c \rbrace \\
\iff & \forall a \in L : a \nin q \Rightarrow a \nin p \\
\iff &q^c \subseteq p^c \iff p \subseteq q. \end{align*}
To prove Item 3, we use several results established in \cite{Guramtriang}.
\begin{enumerate}
\item From \cite[Lemma 4.14]{Guramtriang}, there is a homeomorphism $\alpha$ from $\End(L)$ to $\pt\mathcal{RI}(L)$\footnote{The points of a frame $M$ are frame morphisms $g$ from $M$ into \textbf{2}, the two elements frame. We denote by $\mbox{pt}(M)$ the set of all points of $M$. A \emph{round ideal} of a proximity frame is a lattice ideal $\mathfrak{I}$ such that $\Dprec \mathfrak{I} = \mathfrak{I}$. The set $\mathcal{RI}(L)$ of all round ideals of $L$ is a frame when order by inclusion. } given by 
\[ \alpha : \pt(\RI(L)) \longrightarrow \End(L) : g \longmapsto p_g:= \lbrace a \in L \mid g(\Dprec a) = 1 \rbrace. \]
\item From \cite[Remark 4.21]{Guramtriang}, there is a bijection between $\RF(L)$ and the Scott-open filters of $\RI(L)$ given by 
\[ \R \longmapsto \lbrace \mathfrak{I} \in \RI(L) \mid \R \cap \mathfrak{I} \neq \emptyset \rbrace. \]
\item From \cite[Theorem II.1.20]{Compendium}, there is an order-reversing bijection between Scott-open filter of $\RI(L)$ and the the compact saturated sets of $\pt(\RI(L))$ which is given by 
\[ \mathbb{F} \longmapsto  \lbrace p \in \pt(\RI(L)) \mid \forall \mathfrak{I} \in \mathbb{F} :  g(\mathfrak{I}) = 1 \rbrace. \]
\end{enumerate}
Consequently, there is a bijection between the compact saturated sets of $\pt(\RI(L))$ and $\RF(L)$ which is given by 
\[ \R \longmapsto \lbrace g \in \pt(\RI(L)) \mid \forall \mathfrak{I} \in \RI(L) :   \mathfrak{I}\cap\R \neq \emptyset \Rightarrow g(\mathfrak{I}) = 1 \rbrace. \] Then, since $\alpha$ is a homeomorphism, we now that there is a bijection between the compact saturated sets of $\End(L)$ and $\RF(L)$ given by 
\[ \R \longmapsto \lbrace p \in \End(L) \mid  \underbrace{\forall I \in \RI(L):\mathfrak{I} \cap \R \neq \emptyset \Rightarrow \mathfrak{I} \cap p \neq \emptyset}_{=\star} \rbrace . \] 
Finally, to conclude the proof, let us show that the condition $\star$ is equivalent to $\R \subseteq p$. Clearly, if $\R \subseteq p$, then $\star$ is satisfied. Now, suppose that $\R \not\subseteq p$. Then, there exists an element $a \in \R \setminus p$. In particular, we have that $\Dprec a$ is a round ideal such that $\Dprec a \cap p = \emptyset$ and, since $\R$ is round, such that $\Dprec a \cap \R \neq \emptyset$ such that the condition $\star$ is not satisfied. 
\end{proof}

Now, we focus on how $\End(L)$ relates with $\Prim(L)$. A first immediate remark  is that for every round filter $\F$ and every prime filter $x$, we have 
\begin{equation}\label{eq_prime_and_round_filters}
\F \subseteq x \Leftrightarrow \F \subseteq \Uprec x.
\end{equation}
A second step is undertaken in the following lemma.

\begin{lemma}\label{lem_precx=round} Let $L$ be a proximity frame. For every prime filter $x \in \Prim(L)$, $\Uprec x$ is an end of $L$. 
\end{lemma}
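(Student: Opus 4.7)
The plan is to verify the two defining conditions of Definition \ref{def_ends} in turn: first that $\Uprec x$ is a round filter, and second that it has the end-splitting property with respect to round filters. Both steps will lean heavily on the axioms S1--S8 of a proximity relation, together with the bridge \eqref{eq_prime_and_round_filters}.

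\textbf{Step 1: $\Uprec x$ is a round filter.} I would first check that $\Uprec x$ is a lattice filter. It is non-empty and upward closed by S4. For closure under binary meets, given $b_1, b_2 \in \Uprec x$ with witnesses $a_1, a_2 \in x$ such that $a_i \prec b_i$, the element $a_1 \wedge a_2 \in x$ (since $x$ is a filter) satisfies $a_1 \wedge a_2 \prec b_i$ by S4, and then $a_1 \wedge a_2 \prec b_1 \wedge b_2$ by S2, so $b_1 \wedge b_2 \in \Uprec x$. Roundness, $\Uprec \Uprec x = \Uprec x$, is the interpolation property S8 in one direction, and in the other direction it follows from the transitivity of $\prec$ (which is itself a consequence of S6 and S4).

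\textbf{Step 2: The end property.} The key algebraic observation is the following standard fact about prime filters: if $\F_1, \F_2$ are any filters (no roundness needed) and $x$ is a prime filter with $\F_1 \cap \F_2 \subseteq x$, then $\F_1 \subseteq x$ or $\F_2 \subseteq x$. Indeed, for any $a \in \F_1$ and $b \in \F_2$, upward closure places $a \vee b$ in both $\F_1$ and $\F_2$, hence in $x$; primeness forces $a \in x$ or $b \in x$, and a contradiction argument gives the conclusion. Applying this with $\Uprec x$ in place of $x$: if $\F_1, \F_2$ are round filters with $\F_1 \cap \F_2 \subseteq \Uprec x$, then since $\Uprec x \subseteq x$ by S6 we also have $\F_1 \cap \F_2 \subseteq x$, so one of the $\F_i$ is contained in $x$. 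Now \eqref{eq_prime_and_round_filters}, which trades inclusion in $x$ for inclusion in $\Uprec x$ for round filters, promotes this to $\F_i \subseteq \Uprec x$. The converse implication is immediate.

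No step poses a serious obstacle; the only conceptual point is that one must pass through $x$ rather than argue directly with $\Uprec x$, and this is exactly what \eqref{eq_prime_and_round_filters} was recorded to permit.
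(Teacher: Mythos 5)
Your proof is correct and follows essentially the same route as the paper's: roundness of $\Uprec x$ via S8, then reducing the end condition to primeness of $x$ via S6 and promoting back with \eqref{eq_prime_and_round_filters}. You simply spell out the details (filter axioms, the prime-filter splitting fact) that the paper leaves implicit.
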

\begin{proof}
It is clear that $\Uprec x$ is a filter but, by S8 of Definition \ref{def_proximity_frame_and_morphism}, it is also a round filter. Moreover, if $\F_1$ and $\F_2$ are round filters such that $\F_1 \cap \F_2 \subseteq \Uprec x$. In particular, this implies that $\F_1 \cap \F_2 \subseteq x$. Now, $x$ being a prime filter, we know that $\F_1 \subseteq x$ or $\F_2 \subseteq x$. It then follows from \eqref{eq_prime_and_round_filters} that $\Uprec x$ is an end.

%Now, since $x$ is prime\footnote{If $x$ is a prime filter, then it is meet-prime in the lattice of filters.}
\end{proof}

Our goal now is to prove that every end is of the form $\Uprec x$ for some prime filter $x$. We start with the next proposition.

\begin{proposition}\label{prop_round_filter_and_r_increasing_set} Let $L$ be a proximity frame. There is a bijection between the round filters of $L$ and the $R$-increasing closed subsets of $X=\Prim(L)$, given by 
\begin{equation}\label{def_of_Phi}\Phi: \F  \longmapsto F_{\F} := \lbrace x \in X \mid x \supseteq \F \rbrace. \end{equation}
\end{proposition}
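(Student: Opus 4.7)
The plan is to prove $\Phi$ is well-defined and bijective, with well-definedness and injectivity essentially immediate, leaving the substantive work in surjectivity (more precisely, in verifying roundness). For well-definedness, $F_\F = \bigcap_{a \in \F} \eta(a)$ is closed as an intersection of clopens, and $R$-increasingness follows from roundness: if $x \in F_\F$ and $x \mathrel{R} y$, then $\F = \Uprec \F \subseteq \Uprec x \subseteq y$, so $y \in F_\F$. Injectivity will follow by invoking the classical correspondence between filters of $L$ and closed $\leq$-upsets of $X$ (with inverse $F \mapsto \{a \in L \mid F \subseteq \eta(a)\}$), which already yields injectivity on the broader class of all filters.

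For surjectivity, let $F$ be a closed $R$-increasing subset of $X$. Since $R$ extends $\leq$ (item 2' of Definition \ref{def_ordered_Gleason_space}), $F$ is in particular a $\leq$-upset, so by Priestley duality $F = F_\F$ for the filter $\F := \{a \in L \mid F \subseteq \eta(a)\}$. The remaining task is to show $\F$ is round. Fix $a \in \F$ and set $U := R[-, \eta(a)^c]^c$. Since $R$ is closed in $X^2$ and $\eta(a)^c$ is closed, $U$ is open; by item 2 of the definition, $U$ is also a $\leq$-upset. The $R$-increasingness of $F$ forces $F \subseteq U$: for $x \in F$ and $x \mathrel{R} y$ we have $y \in F \subseteq \eta(a)$, so $x \notin R[-, \eta(a)^c]$.

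To finish, I would decompose $U$ into clopen upsets. For $b \in L$, $\eta(b) \subseteq U$ iff $R[\eta(b), -] \subseteq \eta(a)$ iff $b \prec a$, so $U = \bigcup \{\eta(b) \mid b \prec a\}$. Compactness of $F$ then extracts indices $b_1, \ldots, b_n \prec a$ with $F \subseteq \eta(b_1 \vee \cdots \vee b_n)$, and $b := b_1 \vee \cdots \vee b_n$ satisfies $b \prec a$ by S3 together with $b \in \F$, witnessing roundness. The hard part is exactly this last compactness extraction, which needs simultaneously item 4 of Definition \ref{def_ordered_Gleason_space} (to write $U$ as a union of clopen upsets $\eta(b)$ with $b \prec a$) and $R$-increasingness of $F$ (to fit $F$ inside $U$ in the first place); without either ingredient, one cannot pass from the "approximating" family $\{b \mid b \prec a\}$ to a single witness $b \in \F$.
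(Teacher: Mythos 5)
Your proof is correct, and for the substantive part (roundness of $\F$) it takes a genuinely different route from the paper. The paper argues on the ``closed side'': it writes $F=\bigcap\{\eta(b)\mid \eta(b)\supseteq F\}$, invokes the Esakia Lemma for the closed relation $R$ to commute $R[-,-]$ past the filtered intersection, obtaining $R[F,-]=\bigcap R[\eta(b),-]\subseteq\eta(a)$, and then uses compactness via the finite intersection property to extract $b_1,\ldots,b_n$ with $b:=b_1\wedge\cdots\wedge b_n$ satisfying $b\in\F$ and $b\prec a$. You instead argue on the ``open side'': you fit $F$ inside the open upset $U=R[-,\eta(a)^c]^c$, decompose $U$ as $\bigcup\{\eta(b)\mid b\prec a\}$, and extract a finite subcover, taking $b:=b_1\vee\cdots\vee b_n$ and closing with S3. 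Your version avoids the Esakia Lemma entirely, at the price of needing the standard Priestley fact that an open upset is the union of the clopen upsets it contains; the paper's version is the one that generalizes more readily when one cannot describe $R[-,\eta(a)^c]^c$ so explicitly. One small correction to your commentary: the decomposition $U=\bigcup\{\eta(b)\mid b\prec a\}$ does not use item 4 of Definition \ref{def_ordered_Gleason_space}; it uses only that $U$ is an open upset (openness from closedness of $R$, upset from item 2) together with the equivalence $\eta(b)\subseteq U\iff R[\eta(b),-]\subseteq\eta(a)\iff b\prec a$ supplied by the duality lemma. Item 4 is what encodes S5 and is not needed here.
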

\begin{proof}
First, it is clear that $F_{\F}$ is a closed set and that it is $R$-increasing, so that $\Phi$ is well defined. Moreover, $\Phi$ is one-to-one since every filter is the intersection of the prime filters containing it. Finally, we show that $\Phi$ is onto. Let $F$ be an $R$-increasing closed set. In particular, $F$ is an increasing\footnote{for the order of $X$} closed subset (recall Definition \ref{def_ordered_Gleason_space}), and, therefore, we know that 
\[ F = \bigcap \lbrace \eta(a) \mid \eta(a) \supseteq F \rbrace. \]  If we set $\F= \lbrace a \mid \eta(a) \supseteq F \rbrace$, then

\begin{align*}
x \in F_{\F} \Leftrightarrow \R \subseteq x \Leftrightarrow (\eta(a) \supseteq F \Rightarrow x \in a) \Leftrightarrow x \in F.
\end{align*}Since one can show that $\F$ is a filter by routine calculations, it remains to prove it is round.

 Let $a$ be an element of $\F$. As $F$ is an $R$-increasing set, it comes that $R[F,-] \subseteq \eta(a)$. Recall that $R$ is a closed relation and that $\lbrace \eta(b) \mid \eta(b) \supseteq F \rbrace$ is a filtered family of closed sets such that $F = \bigcap \lbrace \eta(b) \mid \eta(b) \supseteq F \rbrace$. Hence, by Esakia Lemma (see for instance \cite[p. 995]{Sambin1}), it follows that
\[ \eta(a) \supseteq R[F,-] = R[\bigcap \eta(b),-] = \bigcap R[\eta(b),-]. \]
It is now sufficient to use compactness to  obtain 
\[ R[ \eta(b_1) \cap \cdots \cap \eta(b_n),-] \subseteq R[\eta(b_1),-] \cap \cdots \cap R([\eta(b_n),-] \subseteq \eta(a) \] for some $b_1,\ldots, b_n$. If we set $b:= b_1 \wedge \cdots \wedge b_n$, we have 
\[ F \subseteq \eta(b) \text{ and } R[\eta(b),-] \subseteq \eta(a), \] that is $b \in \F$ and $b \prec a$ as required.
\end{proof}

Let us note that the application $\Phi$ defined in \eqref{def_of_Phi} is a reverse order isomorphism, in the sense that for two round filters $\F$ and $\F'$, we have $\F \subseteq \F'$ if and only $\Phi({\F}) \supseteq \Phi({\F'})$. Therefore, the $R$-increasing closed sets which are associated to ends are exactly the join-prime $R$-increasing closed sets. We will use this observation and the next definition to prove the reciprocal of Lemma \ref{lem_precx=round}.

\begin{definition}\label{def_of_equiv} Let $(X,\leq,R)$ be an ordered Gleason space. We denote by $\equiv$ the equivalence relation associated to the pre-order $R$, i.e.
\[ x \equiv y \text{ if and only if } x \mathrel{R} y \text{ and } y \mathrel{R} x. \]
Since $R$ is closed, $\equiv$ is also closed. Moreover,  $X/\equiv$ ordered by
\[ x^\equiv \leq_R y^\equiv \text{ if and only if } x \mathrel{R} y \] is a compact pospace.

We highlight the fact that, if $(X,\leq,R)$ is the dual of a proximity frame $(L,\prec)$, then the equivalence relation $\equiv$ can be expressed as follow: 
\[ x \equiv y \text{ if and only if } \Uprec x = \Uprec y, \] or, equivalently, 
\[ x \equiv y \text{ if and only if } \Dprec x^c = \Dprec y^c. \]
\end{definition}

\begin{lemma}\label{lem_exist_x_mini_in_f_u} Let $(L,\prec)$ be a proximity frame. If $p \in \End(L)$, then there exists a unique $\equiv$-class $x^\equiv$ such that $x$ is R-minimal $x$ and $F_p = R[x,-]$. 
\end{lemma}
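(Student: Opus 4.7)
The plan is to construct a prime filter $x \in \Prim(L)$ with $\Uprec x = p$; once this is achieved, all claims of the lemma follow quickly. Indeed, $p \subseteq x$ gives $x \in F_p$, and Proposition \ref{prop_round_filter_and_r_increasing_set} combined with Lemma \ref{lem_precx=round} yields $R[x,-] = F_{\Uprec x} = F_p$. Any $z \in F_p$ then satisfies $\Uprec x = p \subseteq z$, hence $x \mathrel{R} z$ by \eqref{eq_def_of_R}; in particular, the implication $z \mathrel{R} x \Rightarrow x \mathrel{R} z$ is trivially verified for $z \in F_p$, so $x$ is $R$-minimal in $F_p$. For uniqueness of the equivalence class: if $y$ is any $R$-minimal element of $F_p$, then $y \in R[x,-]$ gives $x \mathrel{R} y$, and applying the $R$-minimality of $y$ to $x \in F_p$ (with the hypothesis $x \mathrel{R} y$) yields $y \mathrel{R} x$, so $x \equiv y$.

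To produce $x$, I appeal to the prime filter theorem with a carefully chosen ideal disjoint from $p$. Using the identification $\End(L) \cong \pt(\RI(L))$ of Proposition \ref{prop_carac_of_P_as_KPSP}, the end $p$ corresponds to a frame morphism $g \colon \RI(L) \to \mathbf{2}$ with $g(\mathfrak{J}) = 1$ iff $\mathfrak{J} \cap p \neq \emptyset$. Set $\mathfrak{I}^{*} := \bigvee\{\mathfrak{J} \in \RI(L) : g(\mathfrak{J}) = 0\}$. Because $g$ preserves finite joins, this family is directed, so $\mathfrak{I}^{*}$ coincides with $\bigcup\{\mathfrak{J} : g(\mathfrak{J}) = 0\}$ as a subset of $L$, and hence $\mathfrak{I}^{*} \cap p = \emptyset$. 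As a round ideal, $\mathfrak{I}^{*}$ is in particular a lattice ideal, so the prime filter theorem delivers a prime filter $x \supseteq p$ with $x \cap \mathfrak{I}^{*} = \emptyset$.

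It remains to verify $\Uprec x = p$. The inclusion $p \subseteq \Uprec x$ is automatic, since $p$ is round and $p \subseteq x$ give $p = \Uprec p \subseteq \Uprec x$. For the reverse, suppose $c \in x$ and $c \prec b$; if $b \notin p$, then $\Dprec b$ is a round ideal disjoint from $p$ (any $c' \in \Dprec b \cap p$ would satisfy $c' \in p$ and $c' \prec b$, forcing $b \in \Uprec p = p$, a contradiction), so by maximality $\Dprec b \subseteq \mathfrak{I}^{*}$, which gives $c \in \Dprec b \subseteq \mathfrak{I}^{*} \cap x = \emptyset$: contradiction. The main technical hurdle is the directedness of the family defining $\mathfrak{I}^{*}$, which rests on $g$ being a frame morphism and is what lets us bypass proving by hand that ends are prime lattice filters.
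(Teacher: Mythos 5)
Your proof is correct, but it takes a genuinely different route from the paper's. The paper stays entirely on the topological side: existence of an $R$-minimal $x$ in the closed set $F_p$ comes from Proposition \ref{prop_exist_R_min} (a Zorn-plus-compactness argument), and uniqueness of its $\equiv$-class comes from a Priestley-type separation argument combined with the fact that $F_p$ is join-prime among $R$-increasing closed sets (via the order-reversing bijection $\Phi$ and the end property of $p$). You instead work algebraically: you realize $p$ as a point $g$ of $\RI(L)$, take $\mathfrak{I}^{*}$ to be the largest round ideal disjoint from $p$ (directedness of the defining family coming from $g$ preserving finite joins), and apply the prime filter theorem to get $x \supseteq p$ with $x \cap \mathfrak{I}^{*} = \emptyset$ and then $\Uprec x = p$; from $\Uprec x = p$ the identities $F_p = F_{\Uprec x} = R[x,-]$, the $R$-minimality of $x$, and uniqueness of $x^{\equiv}$ all follow in a few lines. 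In effect you prove Theorem \ref{thm_sigma_onto} first and read off the lemma as a corollary, reversing the paper's logical order (the paper deduces Theorem \ref{thm_sigma_onto} from this lemma). Your route avoids both the separation argument and Proposition \ref{prop_exist_R_min} at the cost of leaning on the identification $\End(L) \cong \pt(\RI(L))$ from Proposition \ref{prop_carac_of_P_as_KPSP}; the paper's route is self-contained on the dual space and makes no further use of $\RI(L)$. Two cosmetic points: the phrase ``by maximality'' should read ``because $\mathfrak{I}^{*}$ contains every round ideal disjoint from $p$'', and the equality $R[x,-] = F_{\Uprec x}$ is immediate from \eqref{eq_def_of_R} and the definition of $F_{\F}$, so the appeal to Proposition \ref{prop_round_filter_and_r_increasing_set} and Lemma \ref{lem_precx=round} there is not needed.
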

\begin{proof}
We know that for every element $z \in F_p$, there exists an $R$-minimal element $x \in F_p$ such that $x \mathrel{R} z$. Hence, it remains to prove its uniqueness.

Suppose that there exist two $R$-minimal elements $x$ and $y$ in $F_p$ such that $x \not\equiv y$. In other words, we have $x \not\mathrel{R} y$ and $x \not\mathrel{R} y$. Using a classical argument, one can show that there exist two $R$-decreasing open sets $\omega_1$ and $\omega_2$ such that $x \in \omega_1$, $y \in \omega_2$ and $\omega_1 \cap \omega_2 = \emptyset$. In other words, such that $F_p = F_p \setminus \omega_1 \cup F_p \setminus \omega_2$. Since $F_p \setminus \omega_i$ are $R$-decreasing closed sets and $F_p$ is join-prime (recall the discussion after Proposition
\ref{prop_round_filter_and_r_increasing_set}), it follows that $F_p \subseteq F_p \setminus \omega_1$ or $F_p \subseteq F_p \setminus \omega_2$, which is of course impossible since, for instance, $x \in F_p $ and $x \in \omega_1 $.  
\end{proof}

\begin{theorem}\label{thm_sigma_onto} Let $(L,\prec)$ be a proximity frame. A subset $p \subseteq L$ is an end if and only if $ p = \Uprec x$ for some $x \in \Prim(L)$.  
\end{theorem}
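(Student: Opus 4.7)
The forward implication is already handled by Lemma \ref{lem_precx=round}, so the substance of the theorem is the reverse direction: every end $p$ arises as $\Uprec x$ for some prime filter $x$. The plan is to chain together the machinery already built in this section, using the fact that ends correspond to join-prime $R$-increasing closed sets, which in turn correspond to principal sets of the form $R[x,-]$ for an $R$-minimal point $x$.

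First I would start with an end $p \in \End(L)$ and invoke Proposition \ref{prop_round_filter_and_r_increasing_set} to pass to the $R$-increasing closed set $F_p = \{y \in \Prim(L) \mid y \supseteq p\}$. Since $\Phi$ is a reverse-order isomorphism between round filters and $R$-increasing closed sets, the hypothesis that $p$ is an end (i.e., a join-prime round filter) translates into the statement that $F_p$ is join-prime in the lattice of $R$-increasing closed subsets of $\Prim(L)$. This is exactly the input needed for Lemma \ref{lem_exist_x_mini_in_f_u}, which then yields an $R$-minimal element $x \in F_p$ (unique up to $\equiv$) such that
\[ F_p = R[x,-] = \{y \in \Prim(L) \mid \Uprec x \subseteq y\}, \]
where the second equality is the definition \eqref{eq_def_of_R} of $R$.

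Next I would recover $p$ from $F_p$ by the standard identity for distributive lattices: any filter of $L$ equals the intersection of the prime filters containing it. Applied to $p$, this gives $p = \bigcap F_p$. Applied to $\Uprec x$ (which is a filter by S2 and S4, and in fact a round filter by S8), this gives $\Uprec x = \bigcap \{y \in \Prim(L) \mid \Uprec x \subseteq y\}$. But these two intersections are over the same family, namely $F_p = \{y \mid \Uprec x \subseteq y\}$, so $p = \Uprec x$, as required.

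The only potentially delicate step is the appeal to Lemma \ref{lem_exist_x_mini_in_f_u}, which rests on being able to separate two non-$\equiv$-related $R$-minimal elements by disjoint $R$-decreasing open sets. That lemma has already been proved, however, so for this theorem the remaining argument is essentially bookkeeping: translating between $p$, $F_p$, and $\Uprec x$ via the correspondence $\Phi$ and the distributive-lattice identity $p = \bigcap F_p$. I do not expect any obstacle beyond making sure these correspondences line up with the definition \eqref{eq_def_of_R} of $R$, which is exactly what identifies $R[x,-]$ with $\{y \mid \Uprec x \subseteq y\}$.
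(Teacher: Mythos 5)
Your proposal is correct and follows essentially the same route as the paper: both reduce the reverse direction to Lemma \ref{lem_exist_x_mini_in_f_u} to obtain $F_p = R[x,-] = F_{\Uprec x}$ and then conclude $p = \Uprec x$ from the injectivity of the correspondence $\Phi$. Your unwinding of that injectivity via the identity ``a filter is the intersection of the prime filters containing it'' is exactly how the paper proves $\Phi$ is one-to-one in Proposition \ref{prop_round_filter_and_r_increasing_set}, so the two arguments coincide.
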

\begin{proof} The if part is Lemma \ref{lem_precx=round}. For the only part, let $p$ be an end. By Lemma \ref{lem_exist_x_mini_in_f_u}, we have $F_p = R[x,-]$ for some prime filter $x$. In particular, it follows that $\Phi(p) = \Phi(\Uprec x)$ and therefore that $p = \Uprec x$, as required. \end{proof}

It follows from Theorem \ref{thm_sigma_onto} that, at least for the underlying sets, $\End(L)$ is the quotient of $\Prim(L)$ by the relation $\equiv$. We denote by $\sigma$ the application 
\[ \sigma : \Prim(L)/{\equiv} \longrightarrow \End(L) : x^\equiv \longmapsto \Uprec x. \]  We now want to prove that $\End(L)$ is the quotient of $\Prim(L)$ as ordered topological spaces, that is prove that $\sigma$ is an order homeomorphism.

\begin{theorem}\label{thm_plop} Let $(L,\prec)$ be a proximity frame. Then, in the category \textbf{KPSp}, we have 
\[ \End(L) \cong \Prim(L)/{\equiv}, \] by the application $\sigma$.
\end{theorem}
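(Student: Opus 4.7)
The plan is to reduce the statement to two things about $\sigma$: that it is an order-isomorphism of the underlying posets and that it is continuous. A continuous bijection between compact Hausdorff spaces is automatically a homeomorphism, and in \textbf{KPSp} such a homeomorphism which is both monotone and order-reflecting is exactly an isomorphism.

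For the poset part, well-definedness and injectivity are packaged into Definition \ref{def_of_equiv} ($x \equiv y \Leftrightarrow \Uprec x = \Uprec y$); surjectivity is Theorem \ref{thm_sigma_onto}. For order preservation and reflection, I would chain the alternative characterization \eqref{rem_alternative_def_of_R}, which gives $x \mathrel{R} y \Leftrightarrow \Uprec x \subseteq \Uprec y$, with Proposition \ref{prop_carac_of_P_as_KPSP}(1), which gives $p \leq q \Leftrightarrow p \subseteq q$ on $\End(L)$, obtaining $x^{\equiv} \leq_R y^{\equiv} \Leftrightarrow \sigma(x^{\equiv}) \leq \sigma(y^{\equiv})$.

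The substantive work lies in continuity. By Proposition \ref{prop_carac_of_P_as_KPSP}(2)--(3), the patch topology on $\End(L)$ admits a sub-basis made of the sets $\mu(a)$ for $a \in L$ together with the complements of the sets $K_\F$ for $\F \in \RF(L)$. Writing $q : \Prim(L) \to \Prim(L)/{\equiv}$ for the quotient map, it suffices to show that $q^{-1}(\sigma^{-1}(\mu(a)))$ is open and $q^{-1}(\sigma^{-1}(K_\F))$ is closed in $\Prim(L)$. The first case is immediate: this preimage equals $\{x \mid a \in \Uprec x\} = \bigcup_{b \prec a} \eta(b)$, a union of clopen upsets.

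The second, which I expect to be the only real obstacle, amounts to showing that $\{x \in \Prim(L) \mid \Uprec x \subseteq \F\}$ is closed. Unwinding, $\Uprec x \subseteq \F$ iff for every $a \notin \F$ one has $x \cap \Dprec a = \emptyset$, and for a fixed $a$ this latter condition defines the closed set $\bigcap_{b \prec a}(\Prim(L) \setminus \eta(b))$. Intersecting over all $a \notin \F$ yields the required closed set; combined with the order-isomorphism from the previous paragraph, this identifies $\sigma$ as an isomorphism in \textbf{KPSp}. The delicate point here is not depth but the translation from the end-level description of $K_\F$ (via round filters) to a clean Priestley-level condition on prime filters.
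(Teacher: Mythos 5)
Your overall architecture is exactly the paper's: bijectivity from Theorem \ref{thm_sigma_onto} and Definition \ref{def_of_equiv}, the order-isomorphism via the chain $x^\equiv \leq_R y^\equiv \Leftrightarrow \Uprec x \subseteq \Uprec y \Leftrightarrow \sigma(x^\equiv) \leq \sigma(y^\equiv)$, and then continuity plus compact Hausdorffness to upgrade to a homeomorphism. Your treatment of the sets $\mu(a)$ also coincides with the paper's computation $\Pi^{-1}(\sigma^{-1}(\mu(a))) = \bigcup \lbrace \eta(b) \mid b \prec a \rbrace$.

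The gap is in the second half of the continuity check. The sets you analyse, $\lbrace p \mid p \subseteq \F \rbrace$, are already closed in the upper topology $\pi^\uparrow$ generated by the $\mu(a)$: their complements are $\bigcup \lbrace \mu(a) \mid a \nin \F \rbrace$. So your (internally correct) verification that $\lbrace x \mid \Uprec x \subseteq \F \rbrace$ is closed adds nothing beyond the first half, and the part of the patch topology that genuinely needs work --- the co-compact half $\pi^\downarrow$, whose closed sets are the compact saturated sets of $\End(L)$ --- is never addressed; with only the $\mu(a)$ handled you have not established continuity for the patch topology, and $\End(L)$ with the upper topology alone is not Hausdorff. (The displayed formula for $K_{\F}$ in Proposition \ref{prop_carac_of_P_as_KPSP}(3) has the inclusion reversed relative to that proposition's own proof, which exhibits the compact saturated sets as $\lbrace p \mid \F \subseteq p \rbrace$; you followed the displayed formula literally.) The sets to pull back are therefore $\lbrace p \mid \F \subseteq p \rbrace$, and the key step is \eqref{eq_prime_and_round_filters}: for a round filter $\F$ and a prime filter $x$ one has $\F \subseteq \Uprec x$ if and only if $\F \subseteq x$, so the preimage in $\Prim(L)$ is $F_{\F} = \lbrace x \mid \F \subseteq x \rbrace$, which is closed. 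This one-line appeal to \eqref{eq_prime_and_round_filters} is what your write-up is missing; with it the argument closes exactly as in the paper.
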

\begin{proof}
First, we have that $\sigma$ is onto by Theorem \ref{thm_sigma_onto} and that it is one-to-one by definition. We also have that $\sigma$ is an order isomorphism, since we have 
\[ x^\equiv \leq_R y^\equiv \Leftrightarrow x \mathrel{R} y \Leftrightarrow \Uprec x \subseteq \Uprec y \Leftrightarrow \sigma(x^\equiv) \leq \sigma(y^\equiv). \] Therefore, since $\End(L)$ and $\Prim(L)/{\equiv}$ are compact Hausdorff spaces, it remains to prove $\sigma$ is continuous. By Proposition \ref{prop_carac_of_P_as_KPSP}, we have to prove that $\sigma^{-1}(\mu(a))$  and  $\sigma^{-1}(K_{\F})$ are respectively open and closed for every $a \in L$ and $\F \in \mathcal{RI}(L)$. Let $\Pi: x \longmapsto x^\equiv$ be the canonical quotient map. We have: 
\[ \Pi^{-1}(\sigma^{-1}(\mu(a))) = \lbrace x \in \Prim(L) \mid a \in \Uprec x \rbrace = \bigcup \lbrace \eta(b) \mid b \prec a \rbrace \] which is open, and 
\[ \Pi^{-1}(\sigma^{-1}(K_{\F})) = \lbrace x \in \Prim(L) \mid \F \subseteq x \rbrace = F_{\F}, \] which is closed.  
\end{proof}

With Theorem \ref{thm_plop}, we can describe a functor from the category \textbf{OGlSp} to the category \textbf{KPSp} which sends an ordered Gleason space $(X,\leq,R)$ to the compact pospace $(X/{\equiv},R)$. Proposition \ref{prop_ok_funct} gives a hint on how to deal with the morphisms. Indeed, if $\rho \subseteq X \times Y$ is a strong meet-hemirelation which satisfies the ofc and the dvc between ordered Gleason spaces, then we know that it can be associated with a proximity morphism $h : \uOf(Y) \longrightarrow \uOf(X): O \longmapsto
 \rho[-,O^c]^c$. Then, this morphism is associated with the continuous function
 \[ f : \End(\uOf(X)) \longrightarrow \End(\uOf(Y)) : p \longmapsto \Uprec h^{-1}(p). \]
Now, $p$ is equal to $\Uprec x$ for some $x \in X$ and if $y$ is an $R$-minimal element in $\rho[x,-]$, that is such that $h^{-1}(x) \subseteq y$, we have 
\[ f(p) = f(\Uprec x) = \Uprec h^{-1}(\Uprec x) = \Uprec y. \]
Hence, we have to send a meet hemirelation $\rho$ to the function \[f_\rho :X/{\equiv} \longrightarrow Y/{\equiv}: x^\equiv \longmapsto y^\equiv \text{ for $y$ $R$-minimal $\rho[x,-]$}.\]  By the dualities between \textbf{KPSp} and \textbf{PrFrm} and between \textbf{PrFrm} and \textbf{OGlSp}, $f$ is an increasing continuous function. But, we have a direct proof.

\begin{proposition}\label{prop_continuous_function} Let $\rho \subseteq X \times Y$ be a strong hemirelation that satisfies ofc and dvc between two ordered Gleason spaces. The map defined by 
\[ f : X/{\equiv} \longrightarrow Y/{\equiv} :  x^\equiv \longmapsto y^\equiv \] for $y$ R-minimal in $\rho[x,-]$ is an increasing continuous function.
\end{proposition}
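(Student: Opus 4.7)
The plan is to verify, in order, existence of the prescribed image, independence from the representative of $x^{\equiv}$, monotonicity, and continuity. The first three are short consequences of Proposition \ref{prop_exist_R_min} combined with the \emph{ofc}; continuity is the main task and will use the \emph{dvc} together with Proposition \ref{prop_round_filter_and_r_increasing_set}.

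For the first three points, $\rho[x,-]$ is non-empty (by property~$4$ of a strong meet-hemirelation) and closed (as $\rho$ is closed in $X \times Y$ with $Y$ compact), so Proposition \ref{prop_exist_R_min} supplies an $R$-minimal element. If $y_{1}, y_{2}$ are both $R$-minimal in $\rho[x,-]$, the \emph{ofc} applied to $x_{1}=x_{2}=x$ (with $x \mathrel{R} x$) yields $y_{1} \mathrel{R} y_{2}$ and symmetrically $y_{2} \mathrel{R} y_{1}$, so $y_{1} \equiv y_{2}$. Replacing one $x$ by an $\equiv$-equivalent $x'$ in the same manner shows the output $\equiv$-class depends only on $x^{\equiv}$; and if $x_{1} \mathrel{R} x_{2}$, choosing $y_{i}$ $R$-minimal in $\rho[x_{i},-]$ and applying the \emph{ofc} to $y_{1} \mathrel{\rho^{-1}} x_{1} \mathrel{R} x_{2} \mathrel{\rho} y_{2}$ gives $y_{1} \mathrel{R} y_{2}$, i.e.\ $y_{1}^{\equiv} \leq_{R} y_{2}^{\equiv}$.

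For continuity, since open upsets and open downsets form a subbasis of the topology of the compact pospace $Y/{\equiv}$, it suffices to show that $f^{-1}(V)$ is open whenever $V$ is of either form. Writing $\Pi_{X}, \Pi_{Y}$ for the quotient maps, an open upset (respectively downset) of $Y/{\equiv}$ pulls back under $\Pi_{Y}$ to an open $R$-increasing (respectively $R$-decreasing) subset $W \subseteq Y$. Using the consequence of the \emph{ofc} that any $R$-minimal $y_{\min}$ of $\rho[x,-]$ satisfies $y_{\min} \mathrel{R} y'$ for every $y' \in \rho[x,-]$, one verifies the equivalences
\[ y_{\min} \in W \iff \rho[x,-] \subseteq W \qquad\text{when $W$ is $R$-increasing,} \]
\[ y_{\min} \in W \iff \rho[x,-] \cap W \neq \emptyset \qquad\text{when $W$ is $R$-decreasing.} \]
Hence $\Pi_{X}^{-1}(f^{-1}(V))$ equals $\rho[-,W^{c}]^{c}$ in the upset case and $\rho[-,W]$ in the downset case.

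The upset case is immediate: $W^{c}$ is closed and $\rho$ is a closed relation (in a product with $Y$ compact), so $\rho[-,W^{c}]$ is closed and its complement is open. The downset case is the main obstacle, because for an arbitrary open $W$ the set $\rho[-,W]$ need not be open. To overcome this I invoke Proposition \ref{prop_round_filter_and_r_increasing_set}: the closed $R$-increasing complement $W^{c}$ is of the form $\bigcap_{a \in \F} \eta(a)$ for some round filter $\F$ of $\uOf(Y)$, so $W = \bigcup_{a \in \F} \eta(a)^{c}$ and
\[ \rho[-,W] = \bigcup_{a \in \F} \rho[-, \eta(a)^{c}]. \]
The \emph{dvc} applied to each clopen upset $\eta(a)$ exhibits $\rho[-, \eta(a)^{c}]$ as an interior, hence as an open set. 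The union is therefore open, which completes the proof of continuity.
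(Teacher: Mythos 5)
Your proof is correct and follows essentially the same route as the paper's: the \emph{ofc} together with the existence of $R$-minimal elements gives well-definedness and monotonicity, and continuity is checked on the subbasis of open upsets (handled by closedness of $\rho$) and open downsets (handled by decomposing into clopen downsets and applying the \emph{dvc}). The only cosmetic difference is your detour through Proposition \ref{prop_round_filter_and_r_increasing_set} to write $W$ as a union of clopen downsets, where the plain Priestley-space fact that an open downset is the union of the clopen downsets it contains (which is what the paper uses) already suffices; the roundness of the filter plays no role.
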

\begin{proof}
First, the ofc implies that $f$ is well defined and increasing. Now, since $Y/{\equiv}$ is a compact pospace, to prove that $f$ is continuous, it is enough to prove that $f^{-1}(\omega)$ and $f^{-1}(F)$ are respectively open and closed subsets of $X/{\equiv}$ for $\omega$ an open downset and $F$ a closed downset of $Y/{\equiv}$. 

For $\omega$, we have
\begin{align}
&x^\equiv \in f^{-1}(\omega) \nonumber \\
\iff & \exists y \in \Pi^{-1}(\omega) : y \text{ is $R$-minimal in } \rho[x,-] \label{plop} \\
\iff & x \in \rho[-,\Pi^{-1}(\omega)] \label{plopdeux}
\end{align}
While the implication $\eqref{plop} \Rightarrow \eqref{plopdeux}$ does not need to be proved, a word must be spent on the reciprocal. Suppose that $x \in \rho[-,\pi^{-1}(\omega)]$, then we have $x \mathrel{\rho} y$ for some $z \in \Pi^{-1}(\omega)$. By Proposition \ref{prop_exist_R_min}, there exists $y$ R-minimal in $\rho[x,-]$ such that $y \mathrel{R} z$. Now, $\omega$ is a downset of $Y/{\equiv}$, so that $\Pi^{-1}(\omega)$ is an $R$-decreasing subset of $Y$. It follows that $y \in \Pi^{-1}(\omega)$, as required. Restarting from \eqref{plopdeux}, since $\Pi^{-1}(\omega)$ is $R$-decreasing and open, it is in particular an open downset of $Y$. Therefore, 
\[ \Pi^{-1}(\omega) = \bigcup \lbrace O \in \dOf(Y) \mid O \subseteq \Pi^{-1}(\omega) \rbrace\] and, consequently,
\[ \rho[-,\Pi^{-1}(\omega)] = \bigcup \lbrace \rho[-,O] \mid O \subseteq \Pi^{-1}(\omega) \rbrace. \] By the dvc (see Proposition \ref{Prop_towards_ofc}), $\rho[-,O]$ is an open subset of $X$ for every $O \in \dOf(Y)$ and, hence, so is $\rho[-,\Pi^{-1}(\omega)]$. Henceforth, we proved that $\Pi^{-1}(f^{-1}(\omega)) = \rho[-,\Pi^{-1}(\omega)]$ is open in $X$, as required. 

Finally, as for $\omega$, we have that 
\[ \Pi^{-1}(f^{-1}(F)) = \rho[-,\Pi^{-1}(F).] \]
Now, since $\rho$ is closed in $X \times Y$, $\rho[-,\Pi^{-1}(F)]$ is a closed subset of $X$ and the proof is concluded. 
\end{proof}

Hence, we have a functor $\xi$ between the categories \textbf{OGlSp} and \textbf{KPSp} which maps an ordered Gleason space $(X,\leq,R)$ to the compact pospace $(X/{\equiv}, \leq_R)$ an an ordered Gleason relation $\rho \subseteq X \times Y $ to the increasing continuous function $f : X/{\equiv} \longrightarrow Y/{\equiv}$ defined in Proposition \ref{prop_continuous_function}. This functor yields an equivalence between \textbf{OGlSp} and \textbf{KPSp} which is equivalent to the composition of the duality between \textbf{OGlSp} and \textbf{PrFrm} and the duality between \textbf{PrFrm} and \textbf{KPSp}.
\begin{remark} In the Boolean setting, an important feature of Gleason spaces is that their underlying stone spaces are the projective objects in the category \textbf{KHaus}.  This is not the case anymore in our distributive setting. Indeed, the $f$-spaces are not the projective objects of the category \textbf{KPSp} since it would implies that they are projective in the category \textbf{Priest}. However, the injective objects of \textbf{DLat} have been shown in \cite{Balbes} to exactly be the complete Boolean algebras and not the frames. In fact, the projective objects of \textbf{KPSp} are exactly the projective objects of \textbf{KHaus}, that is the extremally disconnected compact spaces, as we show in the next proposition. 
\end{remark}

\begin{proposition} The projective objects in the category \textbf{KPSp} are exactly the extremally disconnected spaces (ordered by equality).
\end{proposition}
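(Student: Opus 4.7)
The plan is to reduce projectivity in \textbf{KPSp} to Gleason's classical theorem on projectivity in \textbf{KHaus}. Concretely, I would first show that any projective object of \textbf{KPSp} is necessarily ordered by equality, and then that, with the equality order, projectivity in \textbf{KPSp} coincides with projectivity in \textbf{KHaus}.

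Suppose $(P,\leq)$ is projective in \textbf{KPSp}. The identity of the underlying set yields a continuous monotone surjection $\pi : (P,=) \to (P,\leq)$, which is an epimorphism since it is surjective. Projectivity gives a monotone continuous section $s : (P,\leq) \to (P,=)$ with $\pi \circ s = \operatorname{id}_P$, so $s$ is the identity map on the underlying set. Monotonicity of $s$ into $(P,=)$ then forces $p \leq q$ to imply $s(p) = s(q)$, i.e.\ $p = q$; thus the order on $P$ is equality. Next, given any continuous surjection $e : X \to Y$ in \textbf{KHaus} and any continuous $f : P \to Y$, equip $X$ and $Y$ with the equality order: they become compact pospaces, $e$ becomes an epimorphism in \textbf{KPSp}, and $f$ is trivially monotone. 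Projectivity of $(P,=)$ then produces a continuous monotone lift $g : P \to X$ of $f$ along $e$, which is just a continuous lift in \textbf{KHaus}. Hence $P$ is projective in \textbf{KHaus} and, by Gleason's theorem, extremally disconnected.

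Conversely, assume $P$ is extremally disconnected and order it by equality. Given an epimorphism $e : A \to B$ in \textbf{KPSp} and a monotone continuous $f : (P,=) \to B$, forgetting the orders and applying Gleason's theorem in \textbf{KHaus} (where $e$ remains a continuous surjection) yields a continuous lift $g : P \to A$ with $e \circ g = f$; since the order on $P$ is equality, $g$ is automatically monotone, completing the argument.

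The one point that deserves care, and which I regard as the main obstacle to making the reductions above fully rigorous, is the identification of the epimorphisms of \textbf{KPSp} with the continuous monotone surjections: for the forward direction it is essential that $\pi : (P,=) \to (P,\leq)$ and the map $e$ coming from \textbf{KHaus} be honest epimorphisms in \textbf{KPSp}. I would establish this by invoking the Nachbin separation theorem to produce, for any non-surjective monotone continuous map, two distinct monotone continuous functions into $[0,1]$ that agree on its image, thereby showing that any non-surjective morphism fails to be epic.
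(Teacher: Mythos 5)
Your proof is correct, and for the ``only if'' direction it takes a genuinely different route from the paper. The paper dispatches that direction in one line --- ``following the proof of Gleason, one can prove that $(X,\tau)$ is extremally disconnected'' --- i.e.\ it proposes to rerun Gleason's construction inside \textbf{KPSp}, and it does not explicitly address why the order of a projective object must be equality. Your argument supplies both points cleanly: the retraction of the identity surjection $(P,=)\to(P,\leq)$ forces the order to be discrete, and once the order is equality you test projectivity against surjections of compact Hausdorff spaces equipped with the equality order, thereby quoting Gleason's theorem as a black box instead of reproving it. The ``if'' direction coincides with the paper's. One remark on the point you flag as the main obstacle: the paper, as is standard for Gleason-type results, formulates projectivity relative to \emph{surjective} monotone continuous maps rather than categorical epimorphisms, so the identification of epimorphisms with surjections is not actually needed --- every map you lift against is surjective by construction (hence epic), and in the converse direction the test map is a surjection by hypothesis. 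If you insist on the epimorphism formulation, your Nachbin-based argument works, but there is a shortcut: every continuous map into $([0,1],=)$ is monotone, so a non-surjective morphism of \textbf{KPSp} already fails to be epic by the classical Urysohn argument in \textbf{KHaus}.
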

\begin{proof}
First, let us consider $(X,=)$ a extremally disconnected space, $(P,\leq)$ and $(Q,\leq)$ compact po-spaces, $f: X \longrightarrow P$ a monotone continuous function and $g: Q \longrightarrow P$ a surjective monotone continuous function. Since every compact po-space is in particular compact Hausdorff, and since the extremally disconnected spaces are projective in \textbf{KHaus}, there exists a continuous function $h : X \longrightarrow Q$ such that $gh = f$ and, since $X$ is ordered by the equality, $h$ is clearly monotone. Hence, $(X,=)$ is indeed projective in \textbf{KPSp}.

On the other hand, suppose that $(X,\tau,\leq)$  is projective in \textbf{KPSp}. Then, following the proof of Gleason in \cite{Gleason}, one can prove that $(X,\tau)$ is extremally disconnected.
\end{proof}

The main ``ethical'' reason behind the failure of ordered Gleason spaces as projective objects in \textbf{KPSp} is the the relation $R$ is submerged by its associated equivalence relation $\equiv$. A solution could be to change the properties of the morphisms in the projective problem so that they directly take into account $R$ instead of $\equiv$.

%\section{Work in progress}
%
%Non.
%
%\begin{definition} An increasing continuous function $f : X \longrightarrow Y$ is said to be \emph{irreducible} if for every proper closed set of $X$, $f(F)$ is a proper subset of $Y$.
%\end{definition}
%
%\begin{proposition} Let $(L,\prec)$ be a proximity frame and $(X,\leq,R)$ its space of prime filters. The quotient map 
%\[ \Pi : X \longrightarrow X/{\equiv}: x \longmapsto x^\equiv \] is irreducible.
%\end{proposition}
%\begin{proof}
%Let $F$ be a proper subset of $X$ , without loss of generality, we may consider it is not empty. We know that there exist $a,b \in L$ such that $F \subseteq \eta(a) \cap \eta(b)^c$ and, since $F$ is proper, we have $a \neq 1$ or $b \neq 0$. 
%
%Suppose first that $a \neq 1$. Then, by S8 and S1, we have $a \prec c \prec 1$ for some 
%\end{proof}
%
%
%\begin{definition} \emph{ Extended KPSp} are triple $(P,\leq, R)$ where $(P,\leq)$ is a KPSp and $R$ is a \emph{good relation}. 
%
%An \emph{extended function} between extended KPSp $f:(X,\leq,R) \longrightarrow (Y,\leq,S)$ such that 
%\[ x_1 \mathrel{R} x_2 \iff f(x_1) \mathrel{S}  f(x_2). \]
%\end{definition}
%
%\begin{proposition} Let $(X,\leq,R)$ be an extended f-space, $(Q,\leq,S)$ an extended KPSp
%\end{proposition}

\section*{Conclusion}

We have completed the external network of equivalences and dualities started in \cite{Guramtriang} and \cite{DeHaGelfand}, generalising to the "distributive setting" the duality between Gleason spaces and compact Hausdorff spaces of \cite{Sourabh}. Hence, we obtain the following commutative diagram, where the arrowed lines represent adjunctions and the non-arrowed ones equivalences or dualities.

\begin{center}
\begin{tikzpicture}
\node(KHaus) at (-1,0) {\textbf{DeV}};
\node(KPSp) at (-2,-1.4) {\textbf{PrFrm}};
\node(DeV) at (1,0) {\textbf{KHaus}};
\node(PrFrm) at (2,-1.4) {\textbf{KPSp}};
\node(C) at (0,3.1) { \textbf{\emph{C}$^\star$-alg}};
\node(GlSp) at (1.6,1.9) { \textbf{GlSp}};
\node(KrFrm) at (-1.6,1.9) {\textbf{KrFrm}};
\node(StKFrm) at (-3.22,2.44) {\textbf{StKFrm}};
\node(OGlSp) at (3.22,2.44) {\textbf{OGlSp}};
\node(usbal) at (0,4.8) {\textbf{usbal}};
\draw[-, >=latex] (KHaus) to (DeV) ;
\draw[-, >=latex] (DeV) to(GlSp);
\draw[-, >=latex] (KrFrm) to(KHaus);
\draw[-, >=latex] (KrFrm) to (C);
\draw[-, >=latex] (GlSp) to (C);
\draw[-, >=latex] (KPSp) to (PrFrm) ;
\draw[-, >=latex] (PrFrm) to(OGlSp);
\draw[-, >=latex] (StKFrm) to(KPSp);
\draw[-, >=latex] (StKFrm) to (usbal);
\draw[-, >=latex] (OGlSp) to (usbal);
\draw[->, >=latex] (C) to (usbal);
\draw[->, >=latex] (GlSp) to (OGlSp);
\draw[->, >=latex] (KrFrm) to (StKFrm);
\draw[->, >=latex] (KHaus) to (KPSp);
\draw[->, >=latex] (DeV) to (PrFrm);
\end{tikzpicture}
\end{center}
However, A proper way to describe the functor between \textbf{KPSp} and \textbf{OGlSp} is still missing. This situation could be solved figuring out the universal problem answered by ordered Gleason spaces. This problem cannot be the usual projective one as we saw at the end of Section \ref{Section3}. We will address this problem in a forthcoming article.

\end{document}